 \newtheorem{theorem}{Theorem}[section]
 \newtheorem{proposition}[theorem]{Proposition}
 \newtheorem{corollary}[theorem]{Corollary}
 \newtheorem{lemma}[theorem]{Lemma}
 \theoremstyle{definition}
 \newtheorem{definition}[theorem]{Definition}
\newcommand{\real}{\ensuremath{\mathbb{R}}}
\newcommand{\lip}{\ensuremath{\text{Lip}}}
\newcommand{\obs}[1]{\ensuremath{O(#1)}}
\newcommand{\obsp}[1]{\ensuremath{O_p(#1)}}
\newcommand{\cobs}[1]{\ensuremath{O^c_{#1}}}
\newcommand{\pcobs}[1]{\ensuremath{Q^c_{#1}}}
\newcommand{\pcobsp}[1]{\ensuremath{Q^c_{p,#1}}}
\newcommand{\borel}{\ensuremath{\mathcal{B}}}
\newcommand{\mean}[1]{\ensuremath{M_\mathcal{#1}}}
\newcommand{\meanp}[1]{\ensuremath{M_{#1,p}}}
\newcommand{\dks}{\ensuremath{d_{KS,p}}}
\newcommand{\dgh}{\ensuremath{d_{GH}}}
\newcommand{\dlp}[1]{\ensuremath{r_{p,#1}}}
\newcommand{\dlpc}[1]{\ensuremath{c_{p,#1}}}
\newcommand{\covp}[1]{\ensuremath{\Sigma_{p,#1}}}
\newcommand{\mm}[1]{\ensuremath{\mathcal{#1}}}
\begin{document}

\title{Observable Covariance and Principal Observable Analysis \\ for Data on Metric Spaces}
\author[1]{Ece Karacam}
\author[1]{Washington Mio}
\author[2]{Osman Berat Okutan}
\affil[1]{Department of Mathematics, Florida State University}
\affil[2]{Max Planck Institute for Mathematics in the Sciences}
\date{ }

\maketitle

\begin{abstract}
Datasets consisting of objects such as shapes, networks, images, or signals overlaid on such geometric objects permeate data science. Such datasets are often equipped with metrics that quantify the similarity or divergence between any pair of elements turning them into metric spaces $(X,d)$, or a metric measure space $(X,d,\mu)$ if data density is also accounted for through a probability measure $\mu$. This paper develops a Lipschitz geometry approach to analysis of metric measure spaces based on metric observables; that is, 1-Lipschitz scalar fields $f \colon X \to \real$ that provide reductions of $(X,d,\mu)$ to $\real$ through the projected measure $f_\sharp (\mu)$. Collectively, metric observables capture a wealth of information about the shape of $(X,d,\mu)$ at all spatial scales. In particular, we can define stable statistics such as the {\em observable mean} and {\em observable covariance} operators $M_\mu$ and $\Sigma_\mu$, respectively. Through a maximization of variance principle, analogous to principal component analysis, $\Sigma_\mu$ leads to an approach to vectorization, dimension reduction, and visualization of metric measure data that we term {\em principal observable analysis}. The method also yields basis functions for representation of signals on $X$ in the {\em observable domain}.

\bigskip
\noindent
{\em Keywords:} metric measure spaces, observable mean, observable covariance, covariance on metric spaces, principal observable analysis, dimension reduction, basis functions.

\medskip
\noindent
{\em 2020 Mathematics Subject Classification.} 62R20 (Primary); 51F30, 55N31 (Secondary)
\end{abstract}

\medskip

\section{Introduction}

Datasets comprising objects that can be represented as points in a metric space $(X,d)$ are of interest in many domains of application. The nature of such objects can vary broadly: point clouds or images with similarity measured via the Wasserstein or Gromov-Wasserstein distance (e.g., \cite{earthmovers98,memoli2011}), collections of shapes with geodesic distances in shape space as exemplified by \cite{kendall1984,younes2000,mm2006,liu2010,charon2013}, and persistence diagrams with the bottleneck distance in topological data analysis (cf.\,\cite{carlsson2009,Chazal2016}) are just some among myriad examples. Data density can also be critically important, in which case $(X,d)$ is further equipped with a (Borel) probability measure $\mu$. Such a triple $(X,d, \mu)$ is known as a {\em metric measure space}, abbreviated $mm$-space. This paper presents a method for probing and analyzing the structure of $mm$-spaces that is rooted in principles reminiscent of those of classical mechanics, where the state of a complex system is probed via observables (measurements). In that setting, a general observable is a continuous mapping $f \colon X \to \real$, where $X$ is the state space of the system. However, as we are interested in the shape of $(X,d, \mu)$ and general observables can markedly distort the shape of both $(X,d)$ and $\mu$, we impose the additional requirement that observables be 1-Lipschitz functions; that is,
\begin{equation}
|f(x)-f(y)| \leq d(x,y),
\end{equation}
for any $x,y \in X$. We refer to these as {\em metric observables} or, relaxing terminology, simply {\em observables}. The 1-Lipschitz condition has the virtue of being uniform across all spatial scales; that is, independent of how small or large $d(x,y)$ may be. Although a single observable only serves as a weak shape descriptor, the empirical evidence suggests that in the aggregate they retain rich information about the shape of $(X,d,\mu)$.  

We define the concepts of {\em observable mean} and {\em observable covariance} that provide a pathway to statistical analysis in the metric measure setting. The observable mean functional simply associates to each observable its expected value and the covariance tensor associates to each pair of observables the correlation of the pair. This allows for the development of an analogue of principal component analysis (PCA) for metric measure data that we term {\em principal observable analysis} (POA). The first principal observable (PO1) $\phi_1 \colon X \to \real$ is the centered (i.e., mean-zero) observable that maximizes the variance of the measure $\mu$ projected to $\real$ via $\phi_1$; that is, the variance of the push-forward measure ${\phi_1}_\sharp (\mu)$.  To define higher principal observables, we iterate the process by maximizing the variance among the centered observables uncorrelated ($\mu$-orthogonal) to those already  constructed. Among other things, POA gives a method for vectorizing $mm$-data, as well as a technique for dimension reduction, visualization, and statistical analysis. Although beyond the scope of this paper, we note that POA vectorization also provides a means for inputting metric data into machine learning pipelines such as neural networks.

\begin{figure}[ht]
\begin{center}
\begin{tabular}{cc}
\begin{tabular}{c}
\includegraphics[width=0.22\linewidth]{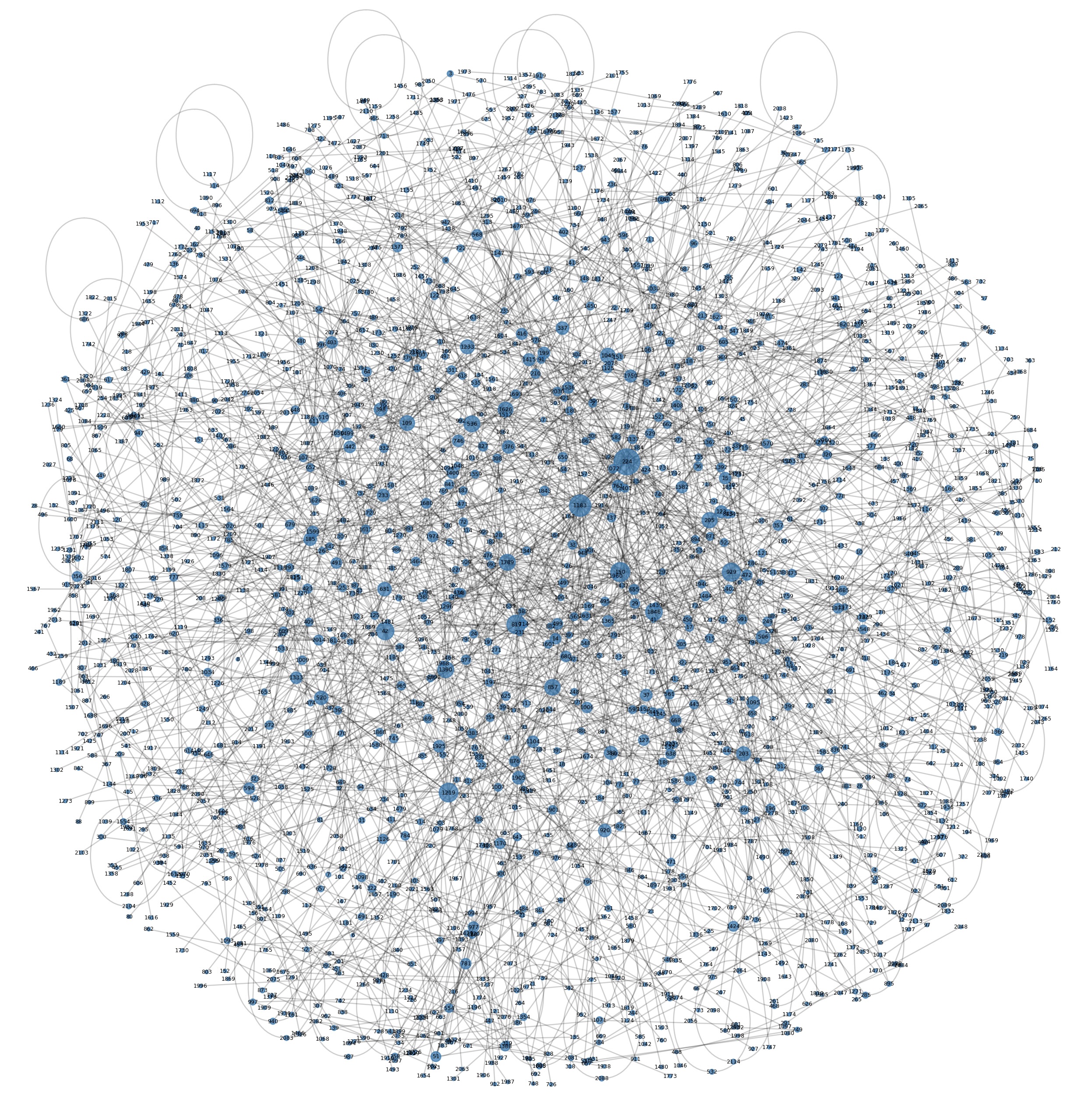} 
\end{tabular}
\qquad & \qquad
\begin{tabular}{c} 
\includegraphics[width=0.25\linewidth]{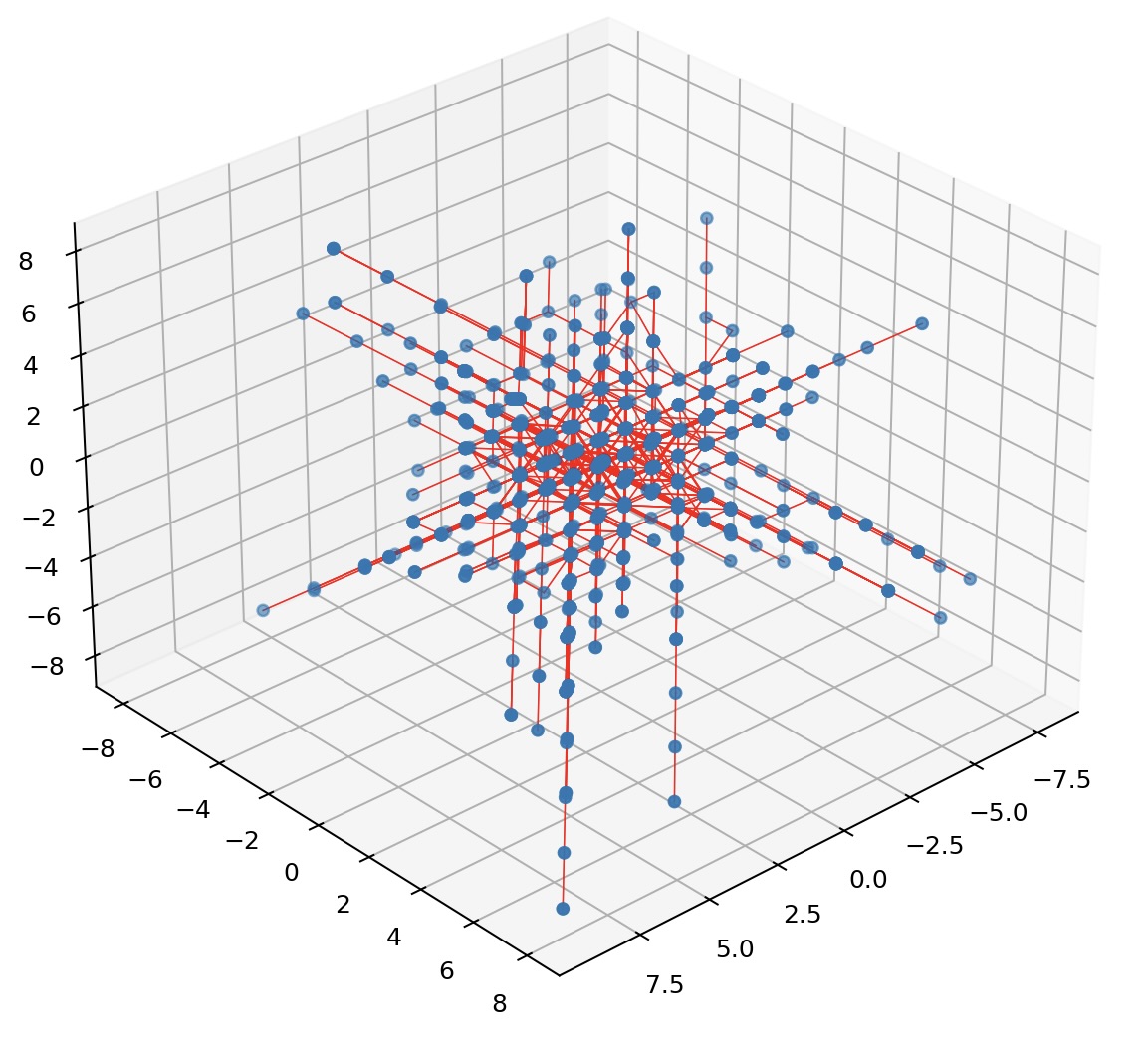} 
\end{tabular} \\
(a) A protein-protein interaction network \qquad & \qquad (b) POA of the largest component \vspace{0.1in}\\
\begin{tabular}{c}
\includegraphics[width=0.35\linewidth]{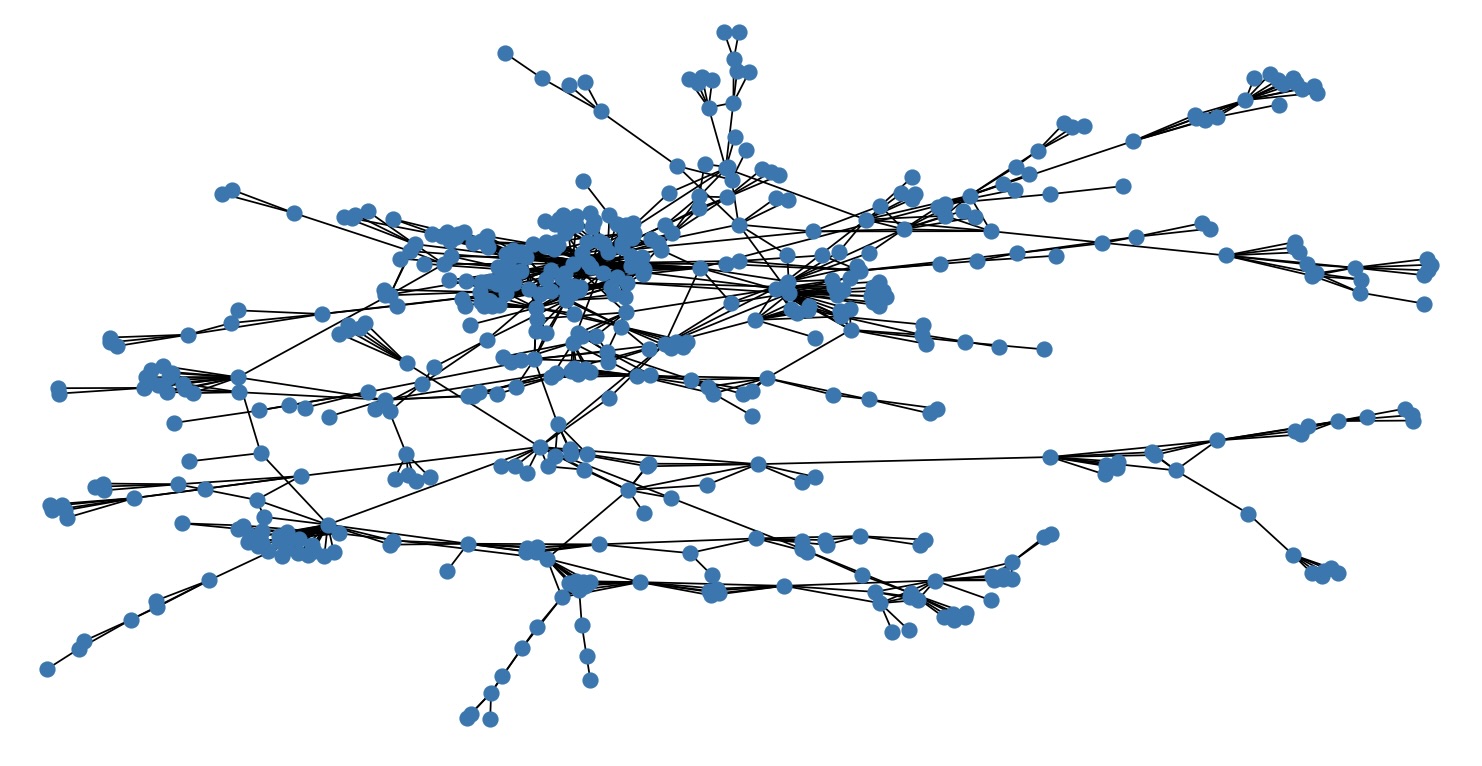} 
\end{tabular} 
\qquad & \qquad
\begin{tabular}{c} 
\includegraphics[width=0.25\linewidth]{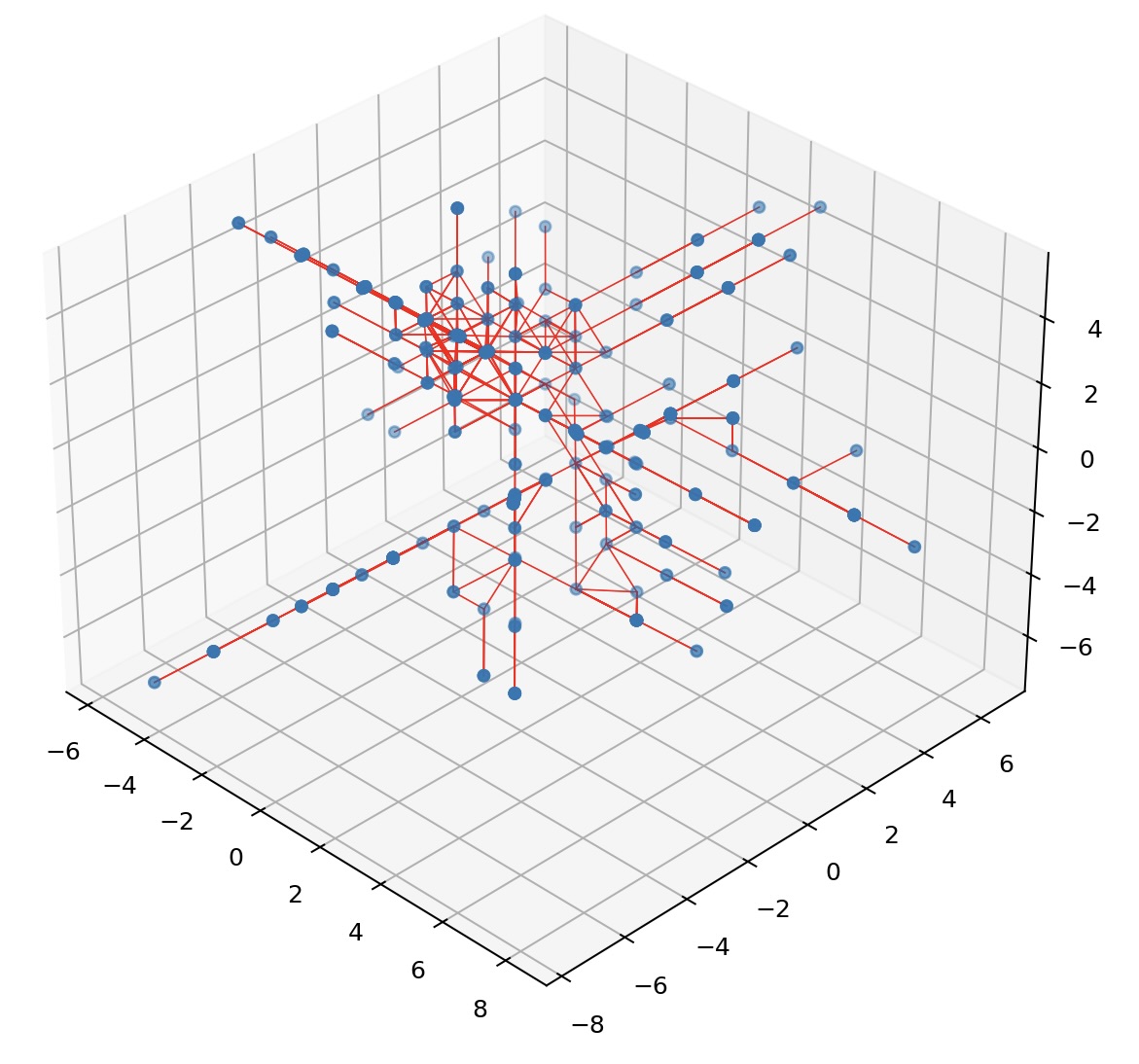} 
\end{tabular} \\
(c) A diseasome network \qquad & \qquad (d) POA representation
\end{tabular} 
\caption{(a) the protein-protein interaction network for yeast {\em(Saccharomyces cerevisiae)}; (b) the 3-dimensional POA representation of its largest connected component; (c) a diseasome network and (d) its 3-dimensional POA representation.}
\label{F:3dpoa}
\end{center}
\end{figure}

To illustrate the method, Figure \ref{F:3dpoa}(a) shows a protein-protein interaction network for yeast {\em (Saccharomyces cerevisiae)}, data obtained from the Network Data Repository \cite{NDR2015}. The node set $V$ of the largest connected component of the network represents 1458 proteins and is viewed as a metric space with the shortest path distance (each edge is assigned unitary length) and the probability measure on $V$ attributes equal weights to all nodes. Figure \ref{F:3dpoa}(b) depicts a 3-dimensional representation of $V$, obtained through the first 3 principal observables and drawn with the same network structure to facilitate visualization. Figures \ref{F:3dpoa}(c) and \ref{F:3dpoa}(d) show a similar example. The original graph is a (connected) diseasome network with 516 nodes representing different diseases. The edges indicate shared genetic origins, molecular mechanisms, or other biological relationships (data also from Network Data Repository).

Much in the spirit of Fourier transforms on networks or compact Riemannian manifolds (cf.\,\cite{rosenberg_1997,hammond2009}), principal observables also yield basis functions for encoding and analyzing signals $f \colon X \to \real$ in the {\em observable domain}. This is an additional application of principal observables that can be further explored and for which we provide some empirical evidence. We also remark that, although POA and PCA are built on the shared principle of maximization of variance of 1-dimensional ``projections'', unlike principal components, the number of principal observables is infinite if the cardinality of $X$ is infinite regardless of the (topological) dimension of $X$. 

We note that metric observables have been used in the study of $mm$-spaces by Gromov to define an {\em observable distance} between probability measures on (possibly distinct) metric domains \cite{gromov2007mg}. In contrast, this paper employs observables to produce a statistical summary for any distribution on a (compact) metric space and as a dimension reduction, vectorization and visualization tool. A special type of metric observables is given by distance-to-a-point functions $f_b \colon X \to \real$, $b \in X$, given by $f_b (x) = d(x,b)$. Distance distributions associated with such observables have been employed by M\'{e}moli and Needham to address various inverse problems for $mm$-spaces in \cite{distdistr22}, which also provides references to additional related literature. 

As stability and consistency are crucial for practical usefulness of statistical summaries of a distribution, we address these problems for the observable mean and observable covariance. For distributions on a fixed metric space, stability is proven with respect to the Wasserstein distance. As there is an increasing interest in analysis and fusion of heterogeneous data, we also investigate stability for distributions defined on different metric domains using the $L_p$ transportation distances between $mm$-spaces due to Sturm \cite{sturm2006}. Stability of principal observables is a more delicate question that requires further investigation. Even for standard PCA, advances in this direction by Pennec are relatively recent, involving a formulation based on flags of principal subspaces \cite{pennec2018}.

\paragraph{Organization.} Section \ref{S:obs} introduces metric observables and the observable mean and covariance. Principal observable analysis is developed in Section \ref{S:poa}, including illustrations of POA applied to metric data. Section \ref{S:basis} is devoted to principal observables viewed as basis functions for signal analysis. Section \ref{S:proof} proves stability theorems for the observable mean and covariance on a fixed metric space, whereas Section \ref{S:heterogeneous} addresses stability for heterogeneous data. Section \ref{S:summary} presents a summary and some discussion. 


\section{Observable Mean and Covariance} \label{S:obs}

Throughout the paper $(X,d)$ denotes a compact metric space. A scalar field $f \colon X \to \real$ is $K$-Lipschitz, $K \geq 0$, if 
\begin{equation}
|f(x) - f(y)| \leq K d(x,y),
\label{E:lip}
\end{equation}
for any $x,y \in X$. This imposes some control on the rate at which $f$ can distort distances, uniformly across all spatial scales; that is, regardless of how close or far apart the points $x$ and $y$ may be. We denote by $\lip (X,d)$ the vector space of all Lipschitz functions on $(X,d)$ equipped with the $\sup$ norm
\begin{equation}
\|f\|_\infty = \sup_{x \in X} |f(x)| \,.
\end{equation}
$\lip (X,d)$ is an infinite dimensional function space if $|X| = \infty$. We refer to 1-Lipschitz (non-expanding) functions as {\em metric observables} (or simply {\em observables}) on $X$ and denote by $\obs{X} \subseteq \lip (X,d)$ the closed, convex  subset of all observables on $(X,d)$.

The collection of all Borel probability measures on $(X,d)$ is denoted $\borel (X,d)$. For $\mu \in \borel (X,d)$, define the {\em mean functional} $M_\mu \colon \lip (X,d) \to \real$ by
\begin{equation}
M_\mu (f) := \mathbb{E}_\mu[f] = \int_X f(x) \, d\mu (x),
\end{equation}
which assigns to an observable $f$ its expected value. The {\em observable mean} is the restriction of $M_\mu$ to $\obs {X}$. Abusing notation, we denote it $M_\mu \colon \obs{X} \to \real$. We also adopt the abbreviation $M_\mu f$ for $M_\mu(f)$.

To define the covariance tensor, let $\lip^c_\mu \subseteq \lip (X,d)$ be the subspace of $\mu$-centered Lipschitz functions given by
\begin{equation}
\lip^c_\mu  = \{f \in \lip (X,d) \colon M_\mu f=0\}
\end{equation}
and $\cobs{\mu} := \obs{X} \cap \lip^c_\mu$, the closed and convex subspace of $\mu$-centered observables. The {\em covariance tensor} of $(X,d,\mu)$ is the bilinear form $\Sigma_\mu \colon \lip^c_\mu \times \lip^c_\mu \to \real$ given by
\begin{equation}
(f,g) \mapsto \int_X f(x) g(x) \, d\mu (x) 
\end{equation}
and the {\em observable covariance} is the restriction $\Sigma_\mu \colon \cobs{\mu} \times \cobs{\mu} \to \real$ to the subspace of $\mu$-centered metric observables, which we also denote by $\Sigma_\mu$. The variance of a (not necessarily centered) observable $f \in \obs{X}$ is given by
\begin{equation}
\sigma^2 (f) = \Sigma_\mu (f-M_\mu f, f-M_\mu f)
\end{equation}
and the correlation between $f,g \in \obs{X}$ by
\begin{equation}
\rho (f,g) = \Sigma_\mu (f-M_\mu f,g-M_\mu g) \,.
\end{equation}

Next, we discuss the continuity of the observable mean and observable covariance, where $\obs{X}$ is equipped with the metric induced by the $\|\cdot\|_\infty$ norm and the product space $\obs{X} \times \obs{X}$ with the metric induced by the norm $\|(f,g)\|_\infty = \max \{\|f\|_\infty, \|g\|_\infty\}$.

\begin{lemma} \label{L:bound}
If $g \in \cobs{\mu}$, then $\|g\|_\infty \leq D_X$, where $D_X$ is the diameter of $(X,d)$.
\end{lemma}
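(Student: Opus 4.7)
The plan is to exploit two features of $g$: the 1-Lipschitz bound gives uniform control on differences of values, while the centering condition forces $g$ to take on both nonpositive and nonnegative values. Combining these two facts pointwise yields the claimed bound.

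First, I would observe that since $g$ is 1-Lipschitz on the compact space $(X,d)$, it is continuous and therefore attains its supremum and infimum; call the maximizer $x_+$ and the minimizer $x_-$. Next, I would use the centering hypothesis $\int_X g \, d\mu = 0$ together with the fact that $\mu$ is a probability measure: if $g$ were strictly positive on $\mathrm{supp}(\mu)$, then $M_\mu g > 0$, contradicting $g \in \cobs{\mu}$, so there exists a point $y_- \in X$ with $g(y_-) \leq 0$. By the symmetric argument, there exists $y_+ \in X$ with $g(y_+) \geq 0$.

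Now I would bound $\|g\|_\infty$ from the two sides. For the upper bound, the 1-Lipschitz property gives
\begin{equation}
g(x_+) \leq g(y_-) + d(x_+, y_-) \leq 0 + D_X = D_X .
\end{equation}
For the lower bound,
\begin{equation}
g(x_-) \geq g(y_+) - d(x_-, y_+) \geq 0 - D_X = -D_X .
\end{equation}
Combining these, $|g(x)| \leq D_X$ for every $x \in X$, hence $\|g\|_\infty \leq D_X$.

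There is no real obstacle here; the only subtle point is ensuring the existence of a point $y_-$ with $g(y_-) \leq 0$ (and dually $y_+$). This follows immediately from the fact that $\mu$ is a probability measure with $\int g \, d\mu = 0$, so one need not invoke anything about the topological support of $\mu$; the strict inequality $g > 0$ $\mu$-almost everywhere would force $M_\mu g > 0$. The rest is just the standard Lipschitz estimate.
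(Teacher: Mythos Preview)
Your proof is correct and follows essentially the same approach as the paper: use the centering condition to locate points where $g$ has each sign, then apply the 1-Lipschitz bound against the diameter. Your version is slightly cleaner in that it proceeds directly rather than by contradiction and does not need to treat the case $g\equiv 0$ separately, but the underlying idea is identical.
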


\begin{proof}
If $g$ is identically zero, then the lemma is trivially satisfied. Thus, assume $g \not\equiv 0$. Since $M_\mu(g)=0$, there must be points where $g$ attains positive and negative values, and the compactness of $X$ ensures that there exist points $x^+, x^- \in X$ where $g$ attains its maximum and minimum values, respectively. Hence, $g(x^+) = \max \{g(x) \colon x \in X\} >0$ and $g(x^-) = \min \{g(x) \colon x \in X\} <0$.

We claim that $g(x^+) < D_X$ and $-D_X < g(x^-)$. Indeed, suppose $g(x^+)\geq D_X$. Then, since $g(x^-)<0$, we have
\begin{equation}
g(x^+) - g(x^-) > g(x^+) \geq D_X \geq d(x^+, x^-),
\end{equation}
contradicting the fact that $g$ is 1-Lipschitz. A similar argument shows that $g(x^-) > -D_X$. Therefore, $-D_X < g(x) < D_X$, $\forall x \in X$, proving the lemma.
\end{proof}

\begin{proposition} \label{P:continuity}
If $\mu \in \borel (X,d)$ and $(X,d)$ is compact, then the observable mean and observable covariance satisfy:
\begin{enumerate}[(i)]
\item $|M_\mu (f) - M_\mu(g)| \leq \|f-g\|_\infty$, for any $f,g \in \obs{X}$;
\item For any $(f_1,g_1), (f_2,g_2) \in \cobs{\mu} \times \cobs{\mu}$,
\[
|\Sigma_\mu (f_1,g_1) - \Sigma_\mu (f_2,g_2)|  \leq 2D_X \cdot \|(f_1,g_1)-(f_2,g_2)\|_\infty,
\]
where $D_X$ is the diameter of $(X,d)$. In particular, $|\sigma^2 (f) - \sigma^2(g)| \leq 2 D_X \cdot \|f-g\|_\infty$,
for any $f,g \in \cobs{\mu}$.
\end{enumerate}
\end{proposition}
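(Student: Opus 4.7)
}

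For part (i), the plan is an elementary one-line calculation: since $\mu$ is a probability measure,
\[
|M_\mu(f) - M_\mu(g)| = \left| \int_X (f-g)\, d\mu \right| \leq \int_X |f-g|\, d\mu \leq \|f-g\|_\infty,
\]
where the last step uses $\mu(X)=1$. No appeal to compactness or to the Lipschitz condition is needed here.

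For part (ii), I would use the standard add-and-subtract trick for products, writing
\[
\Sigma_\mu(f_1,g_1) - \Sigma_\mu(f_2,g_2) = \int_X \bigl(f_1 g_1 - f_2 g_2\bigr)\, d\mu = \int_X f_1 (g_1 - g_2)\, d\mu + \int_X (f_1 - f_2)\, g_2\, d\mu .
\]
Taking absolute values inside the two integrals and using $\mu(X)=1$, the problem reduces to bounding $\|f_1\|_\infty$ and $\|g_2\|_\infty$. This is precisely where Lemma \ref{L:bound} enters: both $f_1$ and $g_2$ belong to $\cobs{\mu}$, so each is bounded in sup-norm by the diameter $D_X$. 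Combining these two bounds with the triangle inequality and using $\|g_1-g_2\|_\infty, \|f_1-f_2\|_\infty \leq \|(f_1,g_1)-(f_2,g_2)\|_\infty$ yields the factor $2D_X$ claimed in the statement.

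The in-particular claim about $|\sigma^2(f) - \sigma^2(g)|$ is then immediate: specialize to $(f_1,g_1) = (f,f)$ and $(f_2,g_2) = (g,g)$, noting that $\|(f,f) - (g,g)\|_\infty = \|f-g\|_\infty$.

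I do not anticipate any real obstacle. The only substantive input is the sup-norm bound supplied by Lemma \ref{L:bound}, which is what makes $\cobs{\mu}$ (as opposed to all of $\obs{X}$) the correct domain on which the bilinear form $\Sigma_\mu$ is Lipschitz with a dimension-free constant depending only on the diameter of $X$.
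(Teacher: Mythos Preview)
Your proposal is correct and follows essentially the same route as the paper: part (i) is the identical one-line estimate, and part (ii) uses the same add-and-subtract decomposition of $f_1g_1 - f_2g_2$ together with Lemma~\ref{L:bound} to bound the sup-norms of the centered observables by $D_X$. The only cosmetic difference is that the paper inserts $\Sigma_\mu(f_2,g_1)$ as the intermediate term whereas you insert $\Sigma_\mu(f_1,g_2)$; both choices give the same bound.
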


\begin{proof}
(i) $|M_\mu (f)-M_\mu(g)| \leq \int_X |f(x)-g(x)| d\mu (x) \leq \sup_{x\in X} |f(x)-g(x)| = \|f-g\|_\infty$.

\medskip

(ii) By Lemma \ref{L:bound}, if $f,g \in \cobs{\mu}$,
\begin{equation} \label{E:cont1}
|\Sigma_\mu (f,g)| \leq \int_X |f(x) g(x)| d\mu (x) \leq \|f\|_\infty \|g\|_\infty \leq D_X \cdot \|f\|_\infty.
\end{equation}
By \eqref{E:cont1}, for any $(f_1,g_1), (f_2,g_2) \in \cobs{\mu} \times \cobs{\mu}$, we have
\begin{equation}\label{E:cont2}
\begin{split}
|\Sigma_\mu (f_1,g_1) - \Sigma_\mu (f_2,g_2)| &\leq 
|\Sigma_\mu (f_1,g_1) - \Sigma_\mu (f_2,g_1)| + |\Sigma_\mu (f_2,g_1) - \Sigma_\mu (f_2,g_2)|  \\
&= |\Sigma_\mu (f_1-f_2,g_1)| + |\Sigma_\mu (f_2,g_1-g_2)| \\
&\leq D_X \cdot \|f_1-f_2\|_\infty + D_X \cdot \|g_1-g_2\|_\infty \\
&\leq 2 D_X \cdot \max\{\|f_1-f_2\|_\infty, \|g_1-g_2\|_\infty\} \\
&= 2D_X \cdot \|(f_1,g_1)-(f_2,g_2)\|_\infty,
\end{split}
\end{equation}
as claimed. Lastly, \eqref{E:cont2} implies that
\begin{equation}
|\sigma^2 (f) - \sigma^2(g)| = |\Sigma_\mu(f,f) - \Sigma_\mu(g,g)|  
\leq 2D_X \cdot \|(f,f)-(g,g)\|_\infty = 2D_X \cdot \|f-g\|_\infty,
\end{equation}
concluding the proof.

\end{proof}


\section{Principal Observable Analysis} \label{S:poa}

Using the observable mean and observable covariance, we develop an analogue of principal component analysis (PCA) for data on metric spaces that we call {\em principal observable analysis} (POA). Among other things, this gives a (metrically controlled) dimension reduction, visualization and vectorization method for $mm$-spaces.

\subsection{Principal Observables}

A {\em first principal observable} (PO1) for $(X,d,\mu)$, denoted $\phi_1$, is a centered metric observable of maximal non-zero variance. In other words,
\begin{equation}
\phi_1:= \mathop{\text{arg max}}_{f \in \cobs{\mu}} \sigma^2 (f) =  \mathop{\text{arg max}}_{f \in \cobs{\mu}} \Sigma_\mu(f,f) \,.
\end{equation}
Inductively, assuming that $\phi_1, \ldots, \phi_{n-1}$ have been constructed, define an $n$th {\em principal observable} $\phi_n$ as a centered metric observable that maximizes the variance among those $\mu$-orthogonal to the subspace spanned by $\phi_1, \ldots, \phi_{n-1}$, which means that $\phi_n$ is uncorrelated with $\phi_i$, $1 \leq i \leq n-1$. More precisely,
\begin{equation}
\phi_n:= \mathop{\text{arg max}}_{f \in \cobs{\mu}} \sigma^2 (f) \,,
\end{equation}
subject to the constraints $\int_X f(x) \phi_i (x) \, d\mu (x) =0$, for $1 \leq i \leq n-1$.

Since we are assuming that $X$ is compact, the existence of a first principal observable $\phi_1$ follows from the continuity of the observable covariance proven in Proposition \ref{P:continuity} and the compactness of $\cobs{\mu}$ with respect to $\|\cdot\|_\infty$, a consequence of the Arzel\`{a}-Ascoli Theorem (cf.\,\cite[Theorem~2.4.7]{dudley2002real}). Thus, as long as $\cobs{\mu}$ contains functions whose variance does not vanish, $\phi_1$ exists. The existence of $\phi_n$, $n>1$, also uses the fact that the subspace $V_n \subseteq \lip^c_\mu$ that is $\mu$-orthogonal to $\phi_1, \ldots, \phi_{n-1}$ is closed so that the domain of maximization $\cobs{\mu} \cap V_n$ is compact. Thus, $\phi_n$ exists if $\cobs{\mu} \cap V_n$ contains observables of non-trivial variance. The details of the existence argument are given in Appendix \ref{A:po}. Uniqueness, almost everywhere and up to sign, is a more delicate question but the experimental evidence is that it holds generically. Uniqueness may not hold, for example, if the $mm$-space $(X,d,\mu)$ admits symmetries. This is also the case for standard PCA  if the covariance matrix has eigenvalues with multiplicity larger than $1$.

In the calculation of $\phi_n$, $n \geq 1$, the cost function $f \mapsto \sigma^2 (f)$ is convex because it is the restriction to a convex domain of the quadratic form associated with the positive semi-definite bilinear form $\Sigma_\mu \colon \lip (X,d) \times \lip (X,d) \to \real$. Thus, the computation of principal observables involves a series of convex maximization problems. In our calculations, we employ the disciplined convex-concave programming algorithm described in \cite{shen2016disciplined}. The computational bottleneck for a finite metric space $X_n=\{x_1, \ldots, x_n\}$, $n$ large, is the enforcement of the 1-Lipschitz condition that involves a constraint for each pair of points in $X_n$, namely, $|\phi(x_i)-\phi(x_j)| \leq d(x_i,x_j)$, for $1 \leq i < j \leq n$. However, the computation can be performed efficiently if the metric space is the node set of a sparse weighted graph and the metric is the shortest path distance. As shown in the next proposition, in this case, the Lipschitz condition only needs to be verified for pairs of vertices connected by an edge, lowering the computational cost significantly.

Let $G=(V,E)$ be a simple graph with vertex set $V = \{v_1, \ldots, v_n\}$ and edge set $E$, with edge weights $w_{ij}>0$ for $\{v_i,v_j\} \in E$. Denote by $d \colon V \times V \to \real$ the metric given by the shortest path distance in $G$.

\begin{proposition}
Let $f \colon V \to \real$. If $|f(v_i)-f(v_j)| \leq w_{ij}$, for any $\{v_i,v_j\} \in E$, then $f$ is 1-Lipschitz.
\end{proposition}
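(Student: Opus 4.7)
The plan is to reduce the global $1$-Lipschitz condition, which must hold for every pair in $V\times V$, to the local edge-wise condition using the very definition of the shortest path metric. Let $u,v \in V$ be arbitrary. Since $d$ is the shortest path distance in $G$, there exists a path $u = u_0, u_1, \ldots, u_k = v$ in $G$ (so each $\{u_{i-1}, u_i\} \in E$) whose total edge weight equals $d(u,v)$; that is,
\[
d(u,v) = \sum_{i=1}^{k} w_{u_{i-1} u_i}.
\]

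Now I would apply the standard telescoping triangle inequality to $f(u) - f(v)$, inserting the intermediate terms $f(u_1), \ldots, f(u_{k-1})$:
\[
|f(u) - f(v)| = \Bigl| \sum_{i=1}^{k} \bigl( f(u_{i-1}) - f(u_i) \bigr) \Bigr| \leq \sum_{i=1}^{k} |f(u_{i-1}) - f(u_i)|.
\]
Each term on the right is an edge-wise difference, so the hypothesis yields $|f(u_{i-1}) - f(u_i)| \leq w_{u_{i-1} u_i}$, and summing gives
\[
|f(u) - f(v)| \leq \sum_{i=1}^{k} w_{u_{i-1} u_i} = d(u,v).
\]
Since $u,v$ were arbitrary, $f$ is $1$-Lipschitz on $(V,d)$.

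There is no real obstacle here; the result is essentially the statement that in any length (path) metric, a Lipschitz estimate on infinitesimal or atomic pieces propagates to a global Lipschitz estimate. The only pitfall to guard against is forgetting to invoke the existence of a path realizing $d(u,v)$, which relies on $G$ being connected on the component containing $u$ and $v$; if $u$ and $v$ lie in different components then $d(u,v) = \infty$ and the inequality is trivial, so this case need not be treated separately.
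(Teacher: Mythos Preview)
Your proof is correct and follows essentially the same argument as the paper: choose a shortest path realizing $d(u,v)$, telescope $f(u)-f(v)$ along that path, and bound each edge-wise increment by the corresponding weight using the hypothesis. The paper's version is almost verbatim the same computation.
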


\begin{proof}
Let $v,w \in V$, $v \ne w$, and $v=v_0 , v_1, \ldots, v_{k-1}, v_k = w$ be an edge path realizing the distance $d(v,w)$. This means that $\{v_{i-1}, v_i\} \in E$, for each $1\leq i \leq k$, and $d(v,w) = \sum_{i=1}^k d_{i-1,i}$. Since $d(v_{i-1}, v_i) = w_{i-1,i}$, by the triangle inequality, we have 
\begin{equation}
|f(w)-f(v)| \leq \sum_{i=1}^k |f(v_i)-f(v_{i-1})| \leq \sum_{i=1}^k w_{i-1,i} = \sum_{i=1}^k d(v_{i-1},v_i) = d(v,w),
\end{equation}
as claimed.
\end{proof}

\subsection{POA Embeddings and Dimension Reduction}

Let $\phi_1, \ldots, \phi_k$ be the first $k$ principal observables of a metric-measure space $(X,d,\mu)$. Define the $k$-dimensional POA ``embedding'' $\imath_k \colon X \to \real^k$ by $\imath_k(x) = (\phi_1 (x), \ldots, \phi_k(x))$. POA embeddings provide a vectorization technique for metric data, as well as a dimension reduction tool. Unlike standard PCA, however, the natural metric on $\real^k$ to consider is not the Euclidean ($L_2$) metric, but rather the $L_\infty$ distance given by
\begin{equation}
\|a-b\|_\infty = \max_{1\leq i\leq k} |a_i-b_i|.
\end{equation}
This is because the defining property of metric observables is that they are 1-Lipschitz and the choice of $L_\infty$ ensures that the POA embedding $\imath_k$ is also 1-Lipschitz.

\begin{figure}[ht]
\begin{center}
\begin{tabular}{cc}
\begin{tabular}{c}
\includegraphics[width=0.22\linewidth]{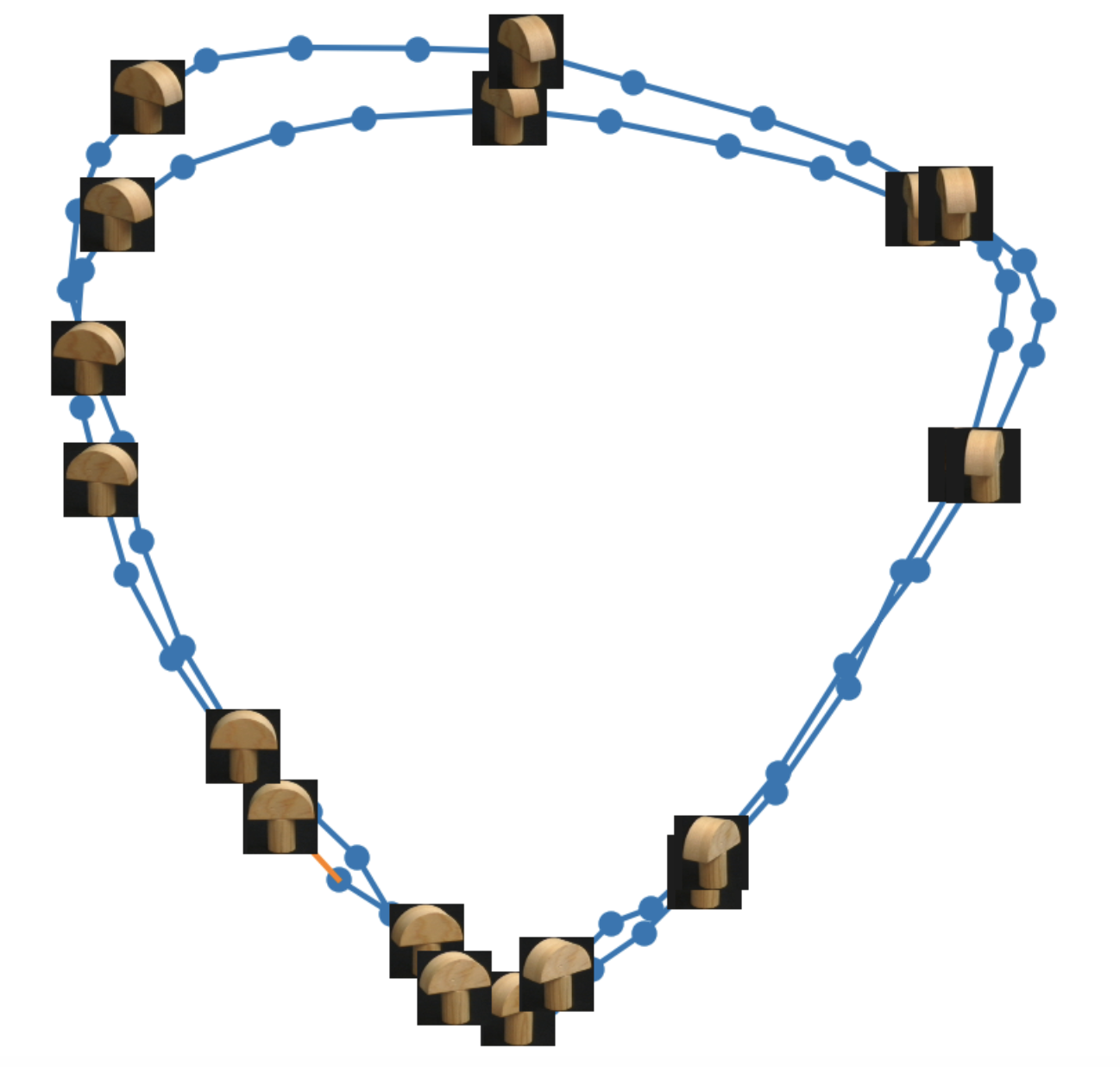} 
\end{tabular}
\qquad & \qquad
\begin{tabular}{c}
\includegraphics[width=0.22\linewidth]{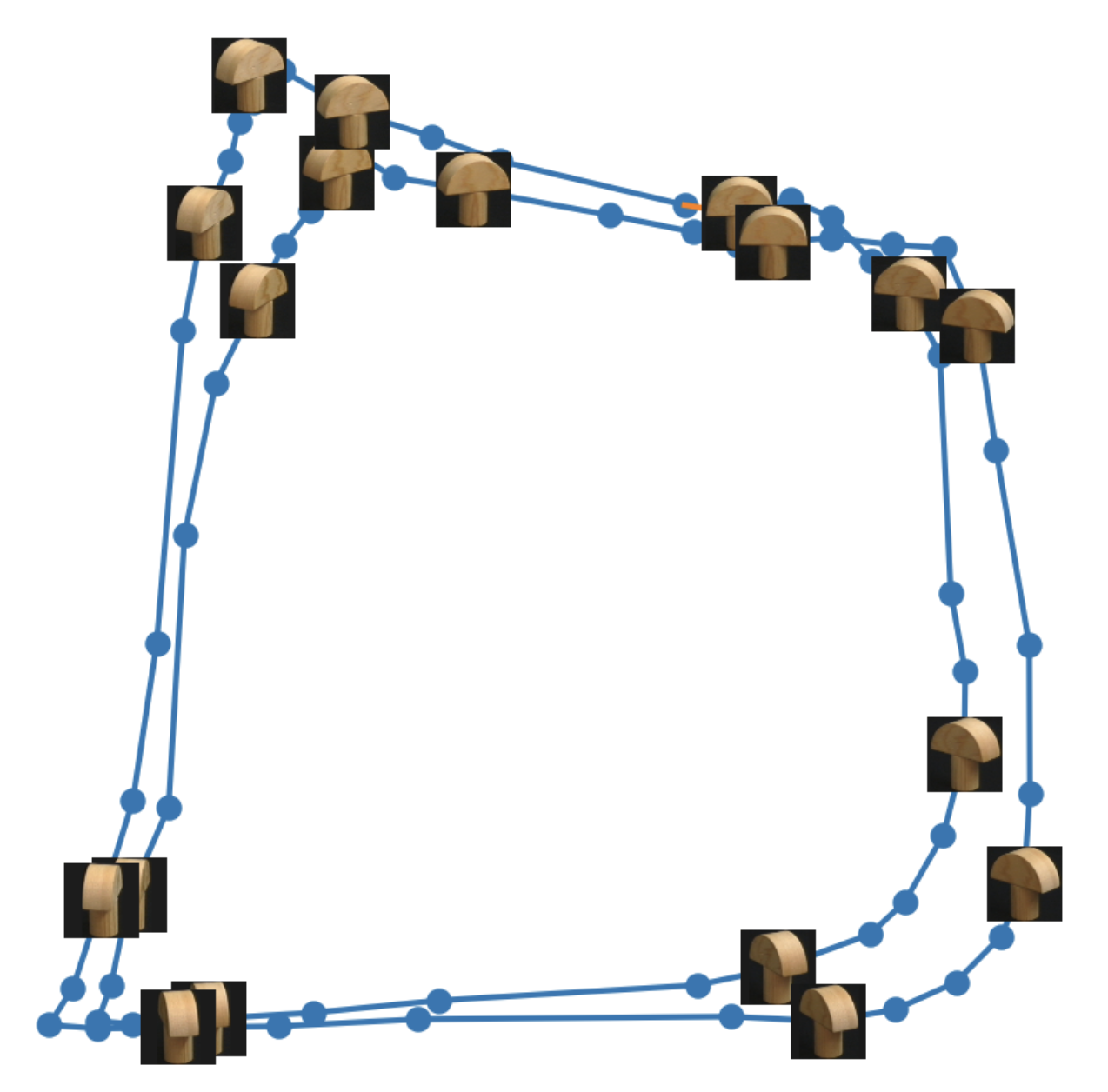}
\end{tabular} \\
(a) PCA \qquad & \qquad (b) POA 
\end{tabular} 
\caption{PCA and POA representations of 72 images taken at 5-degree rotations of a T-shaped object from the COIL-100 database.}
\label{F:2dpoa}
\end{center}
\end{figure}

Figures \ref{F:2dpoa}(a) and \ref{F:2dpoa}(b) show 2-dimensional PCA and POA representations of 72 color images of a T-shaped object from the COIL-100 dataset \cite{nene1996columbia} taken at 5-degree rotations. Each $128 \times 128$ image was converted to grayscale and represented as a vector of pixel values in $\real^{16384}$. To facilitate visualization, consecutive views are connected by an edge and thumbnails of some of the original images are included. As expected, in both cases, the images nearly form loops that are traversed twice because of the symmetry of the object. We emphasize that, even though the original data is Euclidean, the POA representation is calculated on a finite metric space comprising 72 data points. 

\begin{figure}[ht]
\begin{center}
\begin{tabular}{cc}
\begin{tabular}{c}
\includegraphics[width=0.35\linewidth]{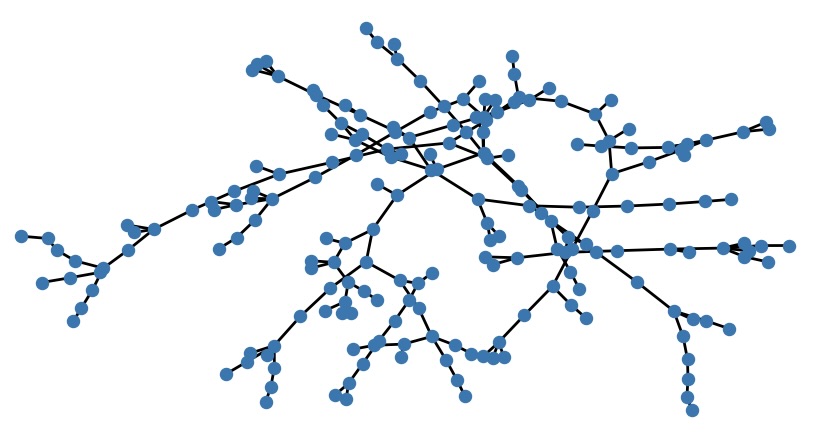}
\end{tabular}
\quad & \quad
\begin{tabular}{c}
\includegraphics[width=0.3\linewidth]{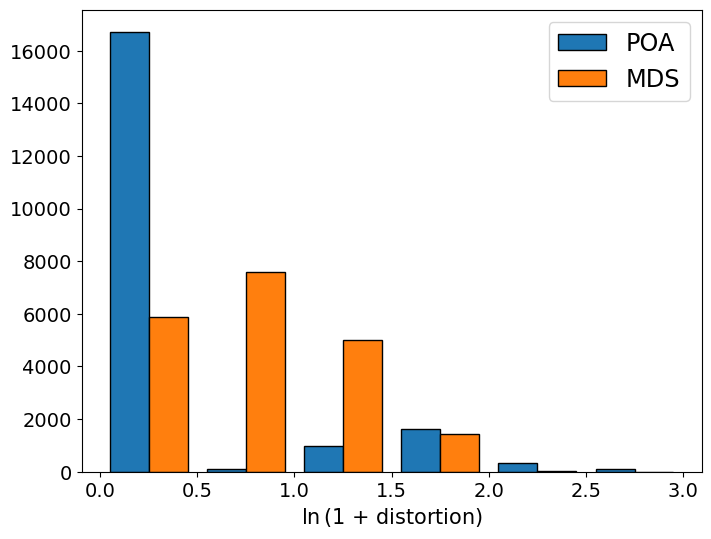}
\end{tabular}
\\
(a) \quad & \quad (b)
\end{tabular}
\caption{(a) A tree with 200 nodes and (b) the histograms of distortions (on a logarithmic scale) for its 3-dimensional POA and MDS embeddings.}
\label{F:comparison}
\end{center}
\end{figure}

POA embeddings into $\real^k$ are not designed to explicitly minimize metric distortions. However, the next example provides some empirical evidence that maximization of variance combined with the 1-Lipschitz condition lead to rather sharp control on metric distortion. We compare POA with the Multidimensional Scaling (MDS) algorithm that seeks to construct an Euclidean embedding that minimizes the average metric distortion (cf.\,\cite{mds1992}). We focus on comparison with MDS because other methods such as the t-Distributed Stochastic Neighbor Embedding (t-SNE) \cite{tsne2008} emphasize other aspects of data visualization, not minimization of distortion. We should also note that comparison of metric distortions of POA and MDS embeddings is not straightforward because the natural metrics on $\real^k$ to consider are not the same, $\|\cdot\|_\infty$ for POA and $\|\cdot\|_2$ for MDS. With these remarks in place, starting with the network depicted in Figure \ref{F:comparison}(a), we assign each edge a unitary length and equip its vertex set $V$ with the shortest path distance $d \colon V \times V \to \real$ and the (normalized) counting measure $\mu$, thus turning $V$ into an $mm$-space. Denote by $\imath_1 \colon V \to \real^3$ and $\imath_2 \colon V \to \real^3$ the 3-dimensional POA and MDS embeddings, respectively, and for each pair of vertices $v,w \in V$, $v \ne w$, calculate the metric distortions
\begin{equation}
\delta_1 (v,w) = \big|d(v,w) - \|\imath_1(v)-\imath_1(w)\|_\infty \big| 
\quad \text{and} \quad
\delta_2 (v,w) = \big|d(v,w) - \|\imath_2(v)-\imath_2(w)\|_2 \big| .
\end{equation}
Figure \ref{F:comparison}(b) shows the histograms of $\ln (1 + \delta_i (v,w))$, $i=1,2$. POA has a much larger number of pairs for which the distortion is small. This can be in part explained by the fact that since MDS seeks to minimize the average distortion, larger distances are likely to influence the outcome more heavily. On the other hand, although in much smaller proportion, POA has more pairs with larger distortions.


\subsection{Mapping New Data}

In applications, the metric space is typically finite and the probability measure is the (normalized) counting measure, for example, consisting of independent and identically distributed samples from a (possibly unknown) metric-measure space $(X,d,\mu)$. We denote them $X_n = \{x_1, \ldots, x_n\}$ and $\mu_n$, respectively. In such situations, if $\phi \colon X_n \to \real$ is a principal observable, it is important to be able to extend $\phi$ to new ``test'' data points $x \in X \setminus X_n$, as this allows us to estimate their PO coordinates. Thus, the question arises as to whether there is a principled way of extending $\phi$ to new points $x \in X$ maintaining the 1-Lipschitz property. We achieve this through the McShane-Whitney extensions $\phi^+, \phi^- \colon X \to \real$ defined as
\begin{equation} \label{E:wmextension}
\phi^+ (x) := \min_{1 \leq i \leq n} \phi(x_i) + d(x,x_i) \quad \text{and} \quad \phi^- (x) := \max_{1 \leq i \leq n} \phi(x_i) - d(x,x_i).
\end{equation}
$\phi^+$ and $\phi^-$ are the maximal and minimal 1-Lipschitz extension of $\phi$, respectively, meaning that any other 1-Lipschitz extension $\psi$ satisfies $\phi^- (x) \leq \psi(x) \leq \phi^+ (x)$ \cite{mcshane34,petrakis2018}. As any convex combination of $\phi^+$ and $\phi^-$ is also a 1-Lipschitz extension of $\phi$, we choose the balanced extension $\phi^0 = (\phi^+ + \phi^-)/2$.


\section{Representation of Signals in the Observable Domain} \label{S:basis}

In this section, we propose to use principal observables as basis functions to obtain a new representation of signals on $mm$-spaces that we refer to as representation in the {\em observable domain}. This is analogous to Fourier decomposition of signals on spaces such as compact Riemannian manifolds or the vertex set of a network. Note however that the principle followed in the construction of basis functions is maximization of variance, which is distinct from minimization of energy in the Fourier case. 

Let $\phi_i \colon X \to \real$, $1 \leq i \leq k$, be the first $k$ principal observables of $(X,d,\mu)$ and set $\phi_0 \equiv 1$. Here we allow $k = \infty$ if the $mm$-space $(X,d,\mu)$ admits infinitely many principal observables. We normalize each $\phi_i$, $0 \leq i \leq k$, to get $\mu$-orthonormal basis functions 
\begin{equation}
u_i := \phi_i/\|\phi_i\|_{2,\mu} \,,
\end{equation}
where $\| \cdot \|_{2,\mu}$ denotes the $\mathbb{L}_2$-norm with respect to $\mu$. Using the basis functions $(u_i)_{i=0}^k$, a signal $f \colon X \to \real$ can be represented by the sequence $a = (a_i)_{i=0}^k$ of coefficients given by
\begin{equation}
a_i := \langle f,u_i\rangle_2 = \int_X f(x) u_i (x) \, d\mu(x) \,,
\end{equation}
$0 \leq i \leq k$. The sequence $a=(a_i)_{i=0}^k$ gives a $(k+1)$-dimensional representation of $f$ in the observable domain. As usual, the sequence $a$ is the same for signals that coincide almost everywhere. Note that $f$ is centered if and only if $a_0 =0$. Moreover, we get a principal observable approximation
\begin{equation}
f \sim f_k = \sum_{i=0}^k a_i u_i .
\end{equation}

\begin{figure}[ht]
\begin{center}
\begin{tabular}{cc}
\begin{tabular}{c}
\includegraphics[width=0.7\linewidth]{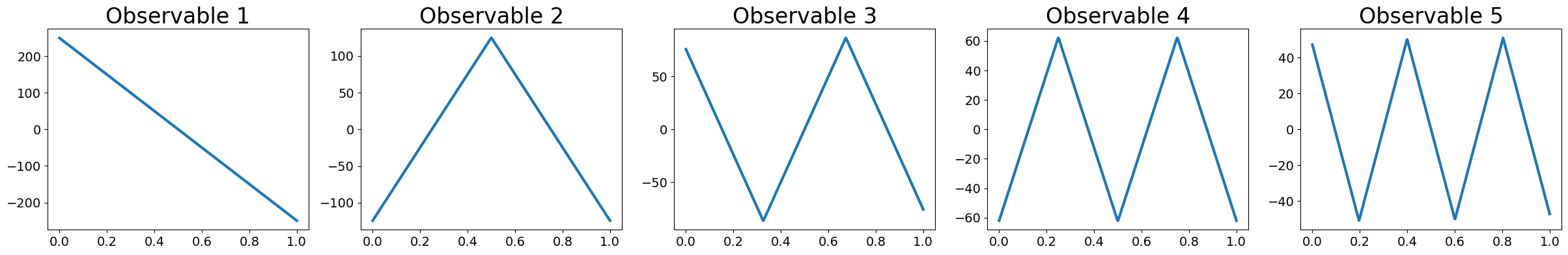} \\
\includegraphics[width=0.7\linewidth]{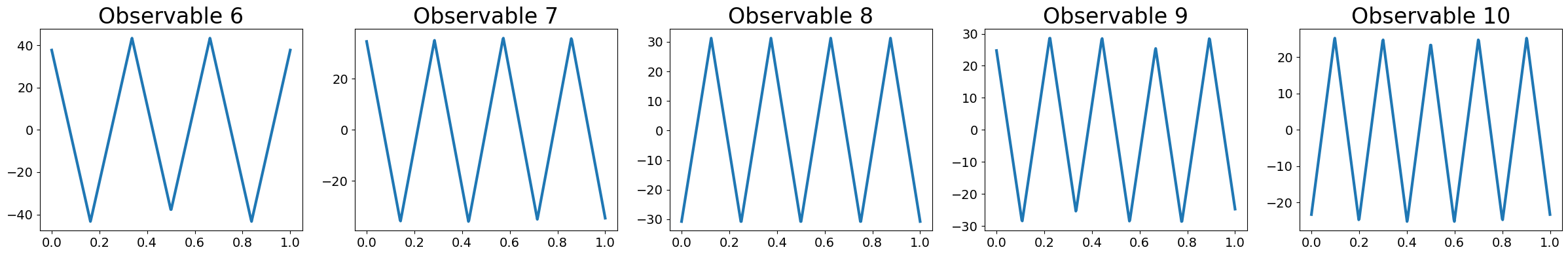}
\end{tabular}
&
\begin{tabular}{c}
\includegraphics[width=0.18\linewidth]{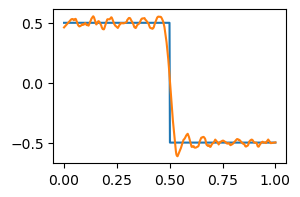} \\
\includegraphics[width=0.18\linewidth]{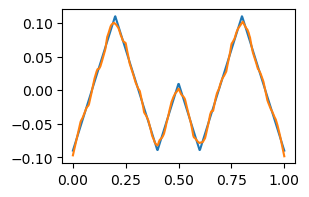}
\end{tabular}
\\
(a) & (b)
\medskip \\
\begin{tabular}{c}
\includegraphics[width=0.7\linewidth]{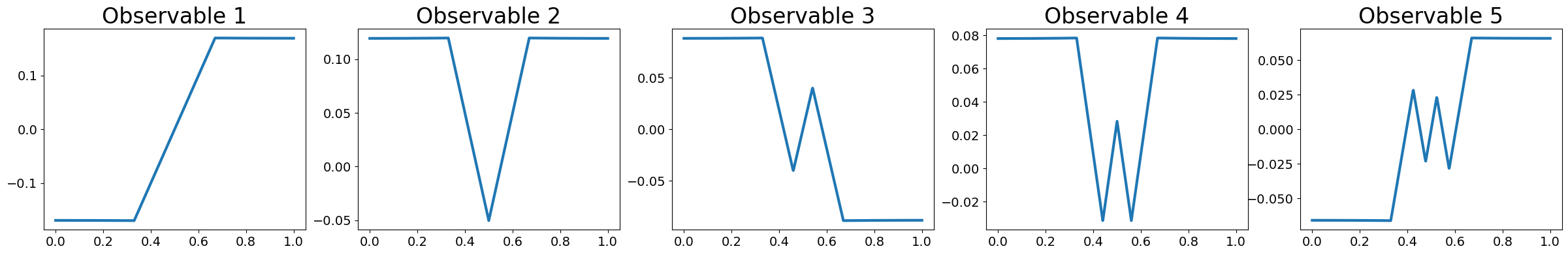} \\
\includegraphics[width=0.7\linewidth]{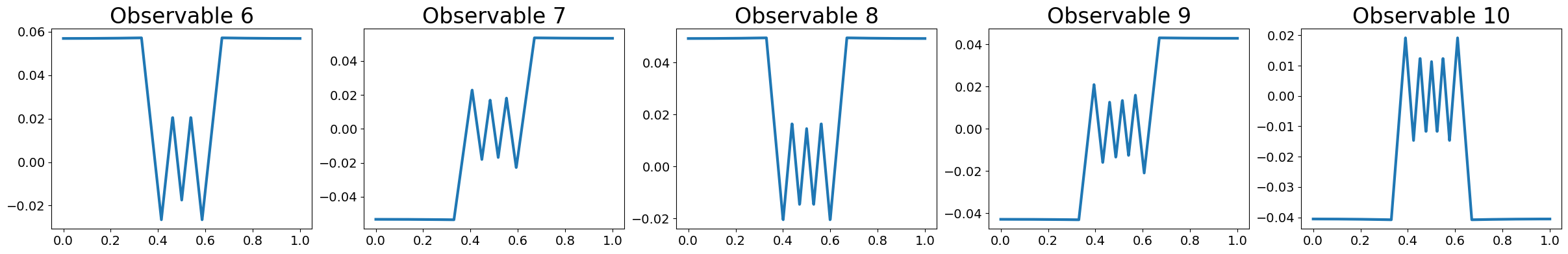}
\end{tabular}
&
\begin{tabular}{c}
\includegraphics[width=0.18\linewidth]{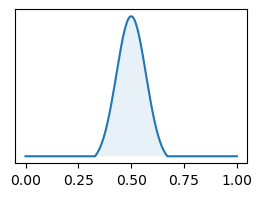} 
\end{tabular}
\\
(c) & (d)
\end{tabular}
\caption{(a) First 10 (normalized) principal observables for a line graph with 501 nodes; (b) signal approximation using the first 25 principal observables; (c) first 10 (normalized) principal observables for a distribution with more localized support shown in (d). Vertical axis scales differ to facilitate visualization.}
\label{F:basis}
\end{center}
\end{figure}

Figure \ref{F:basis}(a) shows the first ten principal observables for a line graph $L$ with 501 nodes forming a uniform grid on the interval $I=[0,1]$ with consecutive nodes connected by an edge of length $1/500$. The observables were calculated as described in Section \ref{S:poa}. Figure \ref{F:basis}(b) depicts the reconstruction of two signals (restricted to the nodes of the line graph $L$) with 25 basis functions. More precisely, the approximation is given by $f \sim \sum_{i=1}^{25}  a_i  u_i$, where $a_i=\langle f,u_i\rangle_2$. The original signal is in blue and the approximation in orange. Figure \ref{F:basis}(c) shows the first ten principal observables for a more localized bell-shaped distribution depicted in Figure \ref{F:basis}(d). We observe an oscillatory pattern similar to that for the uniform distribution but confined to the support of the distribution.


\section{Stability Theorems} \label{S:proof}

We now address the stability of the observable mean and observable covariance for distributions on a fixed compact metric space $(X,d)$. Stability is established with respect to the Wasserstein distance in $(X,d)$. We start with the mean $M_\mu \colon \obs{X} \to \real$ which is simpler to analyze because the domain $\obs{X}$ is independent of the distribution $\mu$. The stability of the covariance $\Sigma_\mu \colon \cobs{\mu} \times \cobs{\mu} \to \real$ is more delicate because the space $\cobs{\mu}$ of centered observables depends on $\mu$.

\subsection{Stability of the Observable Mean} \label{S:stabmean}

To frame the stability of $M_\mu$, we first set up some notation and in the process review the definition of the Wasserstein $p$-distance, $p\geq 1$. Given $\mu, \nu \in \borel(X,d)$, a coupling between $\mu$ and $\nu$ is a probability measure $h$ on the product space $X \times X$ whose marginals are $\mu$ and $\nu$, respectively; that is, ${\pi_1}_\sharp (h)=\mu$ and ${\pi_2}_\sharp (h)=\nu$, where $\pi_1,\pi_2 \colon X \times X \to X$ are the projections onto the first and second coordinates. The collection of all such couplings is denoted $\Gamma(\mu,\nu)$. If $h \in \Gamma(\mu,\nu)$, let
\begin{equation} \label{E:pcost}
E_p (h) = \Big(\int_{X \times X} d^p(x,y) dh(x,y)\Big)^{1/p}.
\end{equation}
The {\em Wasserstein $p$-distance} between $\mu$ and $\nu$ is given by $w_p(\mu,\nu) = \inf_{h \in \Gamma(\mu,\nu)} E_p (h)$. We prove stability of $M_\mu$ with respect to $w_1$, as this implies stability with respect to $w_p$ because $w_1 (\mu,\nu) \leq w_p (\mu,\nu)$, for any $p >1$.

\begin{theorem} \label{T:stabmean}
If $\mu, \nu \in \borel (X,d)$ are Borel probability measures on a compact metric space $(X,d)$, then
\[
\|M_\mu - M_\nu\|_\infty = \sup_{f\in \obs{X}} |M_\mu f - M_\nu f| \leq w_1 (\mu,\nu).
\]
\end{theorem}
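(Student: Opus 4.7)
The plan is to prove the bound for a fixed $f \in \obs{X}$ first, by rewriting the difference $M_\mu f - M_\nu f$ as a single integral against an arbitrary coupling, applying the 1-Lipschitz condition pointwise, and then taking infima and suprema. This is essentially one direction of Kantorovich--Rubinstein duality and requires only the defining properties of couplings together with Fubini-type manipulations.

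Concretely, I would fix $f \in \obs{X}$ and any coupling $h \in \Gamma(\mu,\nu)$. Using that ${\pi_1}_\sharp h = \mu$ and ${\pi_2}_\sharp h = \nu$, the change-of-variables formula yields
\[
M_\mu f - M_\nu f = \int_X f\, d\mu - \int_X f\, d\nu = \int_{X \times X} \bigl(f(x) - f(y)\bigr)\, dh(x,y).
\]
Taking absolute values and pulling them inside the integral gives
\[
|M_\mu f - M_\nu f| \leq \int_{X \times X} |f(x) - f(y)|\, dh(x,y) \leq \int_{X \times X} d(x,y)\, dh(x,y) = E_1(h),
\]
where the second inequality uses the fact that $f \in \obs{X}$ is 1-Lipschitz.

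Since $h \in \Gamma(\mu,\nu)$ was arbitrary, taking the infimum over couplings yields $|M_\mu f - M_\nu f| \leq w_1(\mu,\nu)$, and since the right-hand side does not depend on $f$, taking the supremum over $f \in \obs{X}$ finishes the proof. There is no real obstacle here: the argument is self-contained once one recognizes that the Lipschitz bound on $f$ is exactly tailored to the cost function $d$ appearing in $E_1(h)$. The only tiny thing to verify is measurability of $(x,y) \mapsto f(x) - f(y)$ on $X \times X$, which is immediate because $f$ is continuous and $X$ is compact, so both $f \circ \pi_1$ and $f \circ \pi_2$ are bounded Borel functions and the integrals split as expected.
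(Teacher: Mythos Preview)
Your proof is correct and follows essentially the same approach as the paper's own proof: rewrite $M_\mu f - M_\nu f$ as an integral against a coupling using the marginal conditions, apply the $1$-Lipschitz bound pointwise, and optimize over couplings. The only cosmetic difference is that the paper phrases the final step via an $\epsilon$-argument (choosing a coupling with cost below any given $\epsilon > w_1(\mu,\nu)$) rather than directly taking the infimum over $h$, but the underlying reasoning is identical.
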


\begin{proof}
 Given $\epsilon > w_1 (\mu,\nu)$, let $h \in \Gamma(\mu,\nu)$ be such that $E_p(h) < \epsilon$. For any $f \in \obs{X}$, since the marginals of $h$ are $\mu$ and $\nu$, we may write
\begin{equation} 
M_\mu f = \int_X f(x) \, d\mu (x) = \int _{X \times X} f(x) \,dh (x,y)
\end{equation}
and
\begin{equation} 
M_\nu f = \int_X f(y) \, d\nu (y) = \int _{X \times X} f(y) \,dh (x,y) \,.
\end{equation}
Thus, 
\begin{equation} \label{E:mean}
|M_\mu f - M_\nu f| \leq \int_{X \times X} |f(x)-f(y)|\, dh(x,y) \leq \int_{X \times X} d(x,y) \, dh(x,y) < \epsilon \,.
\end{equation}
The second inequality in \eqref{E:mean} uses the fact that $f$ is 1-Lipschitz. Since \eqref{E:mean} holds for any $\epsilon > w_1 (\mu,\nu)$ and $f \in \obs{X}$ is arbitrary, the claim follows.
\end{proof}


\subsection{Stability of the Observable Covariance} 

The domains of the covariance operators $\Sigma_\mu \colon \cobs{\mu} \times \cobs{\mu} \to \real$ and $\Sigma_\nu \colon \cobs{\nu} \times \cobs{\nu} \to \real$ for $\mu, \nu \in \borel (X,d)$  in general differ. Nonetheless, as $\cobs{\mu} \times \cobs{\mu}$ and $\cobs{\nu} \times \cobs{\nu}$ are both subspaces of $\obs{X} \times \obs{X}$, to compare $\Sigma_\mu$ and $\Sigma_\nu$, we use a (functional) Hausdorff distance $d_H (\Sigma_\mu, \Sigma_\nu)$ that is described next in broader generality.

\begin{definition} \label{D:hausdorff}
Let $(Z,d_Z)$ be a metric space, and $\phi \colon A \to \real$ and $\psi \colon B \to \real$ be continuous functions defined on compact subspaces $A, B \subseteq Z$. 
\begin{enumerate}
\item A real number $\epsilon >0$ is {\em admissible} for $(\phi,\psi)$ if the following holds:
\begin{enumerate}
\item for any $a \in A$, there is $b \in B$ such that $d_Z(a,b) < \epsilon$ and $|\phi(a)-\psi(b)| < \epsilon$;
\item for any $b \in B$, there is $a \in A$ such that $d_Z(a,b) < \epsilon$ and $|\phi(a)-\psi(b)| < \epsilon$;
\end{enumerate}
\item The {\em Hausdorff distance} between $\phi$ and $\psi$ is defined as
\[
d_H (\phi,\psi) = \inf \{\epsilon>0 \colon \text{$\epsilon$ admissible for $(\phi,\psi)$}\}.
\]
\end{enumerate}
\end{definition}

This distance may be interpreted as a standard Hausdorff distance between compact subsets of a metric space \cite{hsdrff1920} via graphs. The graph of $\phi$ is given by
\begin{equation}
\Gamma_\phi = \{(a,\phi(a)) \in Z \times \real \colon a \in A\} \subseteq Z \times \real
\end{equation}
and $\Gamma_\psi \subseteq Z \times \real$ is defined similarly. Equip the product space $Z \times \real$ with the metric
\begin{equation}
d((z,t), (z',t')) = \max \{d_Z(z,z'), |t-t'|\}.
\end{equation}
Then, $d_H(\phi,\psi)$ is the Hausdorff distance in $Z \times \real$ between the compact subspaces $\Gamma_\phi$ and $\Gamma_\psi$. That is, $d_H (\phi,\psi) = d_H^{Z \times \real} (\Gamma_\phi, \Gamma_\psi)$. 

The next lemma is in preparation for the proof of a stability theorem for the observable covariance. Below, we use the abbreviation $f_\mu \coloneq f - M_\mu f$.

\begin{lemma} \label{L:estimate}
If $\mu, \nu \in \borel (X,d)$ and $f,g \in \obs{X}$, then $\big|\Sigma_\mu (f_\mu , g_\mu ) - \Sigma_\nu (f_\nu , g_\nu )\big| \leq 4 D_X \cdot w_1 (\mu,\nu)$, where $D_X$ is the diameter of $X$.
\end{lemma}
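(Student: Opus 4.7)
The plan is to reduce the estimate to two transparent pieces: a direct transport estimate on $\int fg\,d\mu-\int fg\,d\nu$, and a product estimate on the mean terms $(M_\mu f)(M_\mu g)-(M_\nu f)(M_\nu g)$, since expanding the covariance gives
\[
\Sigma_\mu(f_\mu,g_\mu)=\int_X fg\,d\mu-(M_\mu f)(M_\mu g),
\]
and similarly for $\nu$. Each piece should cost at most $2D_X\cdot w_1(\mu,\nu)$.

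First I would exploit an invariance: $(f-c)_\mu=f_\mu$ for any constant $c$, and $f-c$ is still in $\obs{X}$. So by fixing a basepoint $x_0\in X$ and replacing $f$ by $f-f(x_0)$ and $g$ by $g-g(x_0)$, I can assume $\|f\|_\infty\le D_X$ and $\|g\|_\infty\le D_X$ without loss of generality, which is the key normalization.

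For the first piece, take any coupling $h\in\Gamma(\mu,\nu)$ and write
\[
\int_X fg\,d\mu-\int_X fg\,d\nu=\int_{X\times X}\bigl(f(x)g(x)-f(y)g(y)\bigr)\,dh(x,y).
\]
The integrand is controlled by $|f(x)||g(x)-g(y)|+|g(y)||f(x)-f(y)|\le 2D_X\,d(x,y)$, using the normalization together with the 1-Lipschitz property of $f$ and $g$. Integrating and taking the infimum over couplings yields the bound $2D_X\cdot w_1(\mu,\nu)$.

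For the second piece, I would add and subtract $(M_\mu f)(M_\nu g)$ to obtain
\[
\bigl|(M_\mu f)(M_\mu g)-(M_\nu f)(M_\nu g)\bigr|\le |M_\mu f|\cdot|M_\mu g-M_\nu g|+|M_\nu g|\cdot|M_\mu f-M_\nu f|.
\]
The normalization gives $|M_\mu f|,|M_\nu g|\le D_X$, and Theorem \ref{T:stabmean} yields $|M_\mu f-M_\nu f|\le w_1(\mu,\nu)$ and the analogous bound for $g$, totaling another $2D_X\cdot w_1(\mu,\nu)$. Adding the two pieces gives the claimed $4D_X\cdot w_1(\mu,\nu)$. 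The only subtlety is the normalization step at the start; without it, $f$ and $g$ could carry arbitrarily large constant offsets that inflate the product estimate, so the invariance of $\Sigma_\mu(f_\mu,g_\mu)$ under constant shifts is what makes the bound work.
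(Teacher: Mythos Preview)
Your argument is correct. The normalization step is valid because both $\Sigma_\mu(f_\mu,g_\mu)$ and $\Sigma_\nu(f_\nu,g_\nu)$ are invariant under the same constant shift of $f$ (and of $g$), and a $1$-Lipschitz function vanishing at a basepoint indeed satisfies $\|f\|_\infty\le D_X$; after that, both pieces are bounded exactly as you describe.

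The paper takes a different route. Rather than expanding $\Sigma_\mu(f_\mu,g_\mu)=\int fg\,d\mu-(M_\mu f)(M_\mu g)$ and normalizing, it works directly with the centered functions and telescopes $f_\mu(x)g_\mu(x)-f_\nu(y)g_\nu(y)$ into four terms under a coupling $h$: two ``transport'' terms controlled by the Lipschitz property and two ``mean-shift'' terms $|g_\mu-g_\nu|=|M_\nu g-M_\mu g|$ and $|f_\mu-f_\nu|=|M_\nu f-M_\mu f|$ controlled by Theorem~\ref{T:stabmean}. In place of your normalization, the paper invokes Lemma~\ref{L:bound} to bound $|f_\mu|,|g_\nu|\le D_X$. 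Your approach is a bit more modular (the two pieces decouple cleanly and the first can be optimized over couplings independently of the second), while the paper's approach avoids any preprocessing of $f,g$ at the cost of tracking a four-term telescoping; both yield the same constant $4D_X$.
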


\begin{proof}
Let $h \in \Gamma(\mu, \nu)$ and $E_1(h) = \int_X d(x,y) dh (x,y)$. Letting $I (h) = \big|\Sigma_\mu (f_\mu,g_\mu) - \Sigma_\nu (f_\nu ,g_\nu ) \big|$, we have
\begin{equation} \label{E:covestimate1}
\begin{split}
I(h) &= \big|\int_X f_\mu (x) g_\mu (x) d\mu(x) - \int_X f_\nu(y) g_\nu (y) d\nu(y) \big| \\
&= \big|\int_{X\times X} f_\mu (x) g_\mu (x) dh(x,y) - \int_{X \times X} f_\nu(y) g_\nu (y) dh (x,y) \big| \\
&\leq \int_{X\times X} |f_\mu (x)| \cdot  |g_\mu (x)-g_\nu (x)| dh(x,y) 
+ \int_{X\times X} |f_\mu (x)| \cdot |g_\nu (x)-g_\nu (y)| dh(x,y) \\
&+ \int_{X\times X} |f_\mu (x)-f_\mu(y)| \cdot |g_\nu (y)| dh(x,y) 
+ \int_{X\times X} |f_\mu (y)-f_\nu(y)| \cdot |g_\nu (y)| dh(x,y).
\end{split}
\end{equation}
From \eqref{E:covestimate1}, using Lemma \ref{L:bound} and the fact that observables are 1-Lipschitz, we obtain
\begin{equation} \label{E:covestimate2}
\begin{split}
I(h) \leq 2 D_X \int_{X \times X} d(x,y) dh(x,y)
&+ D_X \int_{X\times X} |g_\mu (x)-g_\nu (x)| dh(x,y) \\
&+ D_X \int_{X\times X} |f_\mu (y)-f_\nu(y)| dh(x,y).
\end{split}
\end{equation}
Since $f_\mu(y) - f_\nu(y) = M_\mu f-M_\nu f$ and $g_\mu (x) - g_\nu (x) = M_\mu g - M_\nu g$, inequality \eqref{E:covestimate2} and Theorem \ref{T:stabmean} yield
\begin{equation}
I(h) \leq 2 D_X \int_{X \times X} d(x,y) dh(x,y) + 2 D_X w_1 (\mu,\nu).
\end{equation}
As the coupling $h$ is arbitrary, we can conclude that $\big|\Sigma_\mu (f_\mu , g_\mu ) - \Sigma_\nu (f_\nu , g_\nu )\big| \leq 4 D_X \cdot w_1 (\mu,\nu)$, as claimed.
\end{proof}

\begin{theorem} \label{T:stabcov}
Let $(X,d)$ be a compact metric space. If $\mu, \nu \in \borel (X,d)$, then $d_H(\Sigma_\mu, \Sigma_\nu) \leq C_X \cdot w_1 (\mu,\nu)$, where $C_X = \max\{1, 4D_X\}$.
\end{theorem}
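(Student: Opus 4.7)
The plan is to unpack the definition of the functional Hausdorff distance on the graphs of $\Sigma_\mu$ and $\Sigma_\nu$, both viewed as continuous functions defined on compact subsets of the metric space $Z = \obs{X} \times \obs{X}$ with its product sup-metric. Thus I need to show that for every $\epsilon > C_X \cdot w_1(\mu,\nu)$, both admissibility conditions of Definition \ref{D:hausdorff} hold for the pair $(\Sigma_\mu, \Sigma_\nu)$. The problem is naturally symmetric in $\mu$ and $\nu$, so I will carry out one direction and appeal to symmetry for the other.

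The key construction is a canonical "re-centering" map from $\cobs{\mu} \times \cobs{\mu}$ into $\cobs{\nu} \times \cobs{\nu}$. Given $(f,g) \in \cobs{\mu} \times \cobs{\mu}$, I set $f' := f - M_\nu f$ and $g' := g - M_\nu g$. Since observables are closed under translation by constants and $M_\nu f' = 0$, the pair $(f',g')$ lies in $\cobs{\nu} \times \cobs{\nu}$. The two estimates I need for $(f,g)$ and $(f',g')$ now match up exactly with what has been proved upstream.

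For the domain estimate, note that $\|f - f'\|_\infty = |M_\nu f|$, and since $f \in \cobs{\mu}$ means $M_\mu f = 0$, this equals $|M_\mu f - M_\nu f|$. By Theorem \ref{T:stabmean} this is at most $w_1(\mu,\nu)$. The same bound holds for $\|g - g'\|_\infty$, so $\|(f,g) - (f',g')\|_\infty \leq w_1(\mu,\nu)$. For the value estimate, observe that $f = f - M_\mu f = f_\mu$ and $f' = f - M_\nu f = f_\nu$, and likewise for $g$; thus
\[
\bigl|\Sigma_\mu(f,g) - \Sigma_\nu(f',g')\bigr| = \bigl|\Sigma_\mu(f_\mu,g_\mu) - \Sigma_\nu(f_\nu,g_\nu)\bigr| \leq 4 D_X \cdot w_1(\mu,\nu)
\]
by Lemma \ref{L:estimate}. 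Combining the two bounds, any $\epsilon$ strictly larger than $\max\{1, 4D_X\} \cdot w_1(\mu,\nu) = C_X \cdot w_1(\mu,\nu)$ satisfies condition (a) of admissibility. Condition (b) is handled identically by swapping the roles of $\mu$ and $\nu$: starting from $(f',g') \in \cobs{\nu} \times \cobs{\nu}$, re-center against $\mu$ to produce the required partner in $\cobs{\mu} \times \cobs{\mu}$.

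The argument is largely an assembly exercise; there is no substantive obstacle beyond choosing the right candidate pair. The only thing worth being careful about is recognizing that re-centering is both the right map to land in the correct domain and simultaneously the operation that feeds precisely into the hypotheses of Lemma \ref{L:estimate}, so that Theorem \ref{T:stabmean} and Lemma \ref{L:estimate} bound the two components of Hausdorff admissibility in parallel.
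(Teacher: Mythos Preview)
Your proof is correct and essentially identical to the paper's own argument: both re-center $(f,g)\in\cobs{\mu}\times\cobs{\mu}$ to $(f_\nu,g_\nu)\in\cobs{\nu}\times\cobs{\nu}$, invoke Theorem~\ref{T:stabmean} for the domain estimate $\|(f,g)-(f_\nu,g_\nu)\|_\infty\le w_1(\mu,\nu)$, and invoke Lemma~\ref{L:estimate} for the value estimate $|\Sigma_\mu(f_\mu,g_\mu)-\Sigma_\nu(f_\nu,g_\nu)|\le 4D_X\, w_1(\mu,\nu)$, then appeal to symmetry for condition~(b).
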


\begin{proof}
We first show that any $\epsilon > C_X \cdot w_1 (\mu,\nu)$ is admissible for $\Sigma_\mu$ and $\Sigma_\nu$. To verify condition 1(a) in Definition \ref{D:hausdorff}, given $(f,g) \in \cobs{\mu} \times \cobs{\mu}$, let $(f',g') = (f_\nu, g_\nu) \in \cobs{\nu} \times \cobs{\nu}$. Since $M_\mu f =0$, we have that $f_\mu-f_\nu = (f-M_\mu f) -(f-M_\nu f) = M_\nu f- M_\mu f$ and similarly for $g$. Thus,
\begin{equation}
\begin{split}
\|(f,g)-(f',g')\|_\infty = \max \big\{ \|f_\mu-f_\nu\|_\infty, \|g_\mu -g_\nu\|_\infty \big\}
= \max \big\{ |M_\mu f-M_\nu f|, |M_\mu g-M_\nu g| \big\} .
\end{split}
\end{equation}
Thus, by Theorem \ref{T:stabmean}, $\|(f,g)-(f',g')\|_\infty \leq w_1(\mu,\nu) < \epsilon$, verifying the first part of condition 1(a). The second part of 1(a) also holds because, by Lemma \ref{L:estimate},
\begin{equation}
\big|\Sigma_\mu (f,g) - \Sigma_\nu (f',g')\big| =  \big|\Sigma_\mu (f_\mu,g_\mu) - \Sigma_\nu (f_\nu,g_\nu) \big| 
\leq 4 D_X \cdot w_1 (\mu,\nu) < \epsilon.
\end{equation}
Similarly, we verify condition 1(b) in Definition \ref{D:hausdorff}. Since $\epsilon > C_X \cdot w_1(\mu,\nu)$ is arbitrary, it follows that $d_H(\Sigma_\mu, \Sigma_\nu) \leq C_X \cdot w_1 (\mu,\nu)$.
\end{proof}


\subsection{Consistency} 

In practice, the distribution $\mu$ is typically unknown and we only have access to random draws from $\mu$. As such, we examine how well the observable mean and covariance can be estimated from empirical data. More formally, let $x_i \in X$, $1 \leq i < \infty$, be a sequence of random draws from $\mu$ and, for each $n \geq 1$, let $\mu_n = \sum_{i=1}^n \delta_{x_i}/n$ be the associated empirical measure. 

\begin{corollary}
If $x_i \in X$, $i \geq 1$, is a sequence of independent random draws from $\mu$, then:
\begin{enumerate}[\rm (i)]
\item $\lim_{n \to \infty} |M_\mu (f) - M_{\mu_n}(f)|= 0$ uniformly on $f \in \obs{X}$, ${\otimes_\infty}\mu$-almost surely;
\item $\lim_{n \to \infty} d_H(\Sigma_\mu, \Sigma_{\mu_n}) = 0,$  ${\otimes_\infty}\mu$-almost surely.
\end{enumerate}
\end{corollary}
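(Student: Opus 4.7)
The plan is to reduce both parts to the single almost-sure statement $w_1(\mu, \mu_n) \to 0$ and then invoke the stability theorems already proved. Theorem~\ref{T:stabmean} gives the uniform bound
\[
\sup_{f \in \obs{X}} |M_\mu f - M_{\mu_n} f| \leq w_1(\mu, \mu_n),
\]
so part (i) follows immediately once the Wasserstein distance is shown to vanish. Likewise, Theorem~\ref{T:stabcov} supplies $d_H(\Sigma_\mu, \Sigma_{\mu_n}) \leq C_X \cdot w_1(\mu, \mu_n)$, which yields (ii).

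To prove $w_1(\mu, \mu_n) \to 0$ almost surely, one can combine two classical facts: Varadarajan's theorem (the empirical-measure extension of Glivenko--Cantelli to separable metric spaces) guarantees weak convergence $\mu_n \to \mu$ with $\otimes_\infty \mu$-probability one, and on a compact metric space the diameter bound $D_X < \infty$ eliminates the usual tightness and moment conditions, so weak convergence coincides with $w_1$-convergence. However, a more self-contained route, and the one I would prefer to write out, bypasses the appeal to Varadarajan entirely and exploits the structure already used in Section~\ref{S:poa}.

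Fix a basepoint $x_0 \in X$ and let $\obs{X}_0 = \{f \in \obs{X} \colon f(x_0) = 0\}$. Every $f \in \obs{X}_0$ satisfies $|f(x)| \leq d(x,x_0) \leq D_X$ and is 1-equicontinuous, so $\obs{X}_0$ is totally bounded in the sup norm by the Arzel\`{a}--Ascoli theorem (already invoked in Section~\ref{S:poa}). Since subtracting a constant from $f$ does not affect $M_\mu f - M_{\mu_n} f$, the Kantorovich--Rubinstein duality can be recast as
\[
w_1(\mu, \mu_n) = \sup_{f \in \obs{X}_0} \Bigl| \int_X f \, d\mu - \int_X f \, d\mu_n \Bigr|.
\]
For each $k \in \mathbb{N}$ pick a finite $(1/k)$-net $f_1, \dots, f_{N_k} \in \obs{X}_0$. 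By the strong law of large numbers applied to the bounded random variables $f_j(x_i)$, the sample average $\int_X f_j \, d\mu_n$ converges to $\int_X f_j \, d\mu$ on a full-measure event $A_{j,k}$. On the countable intersection $A = \bigcap_{k,j} A_{j,k}$, a standard $\epsilon/3$-type argument upgrades finite-$j$ convergence to uniform convergence over $\obs{X}_0$: any $g \in \obs{X}_0$ is within $1/k$ of some $f_j$, which forces $\limsup_n w_1(\mu,\mu_n) \leq 2/k$ for every $k$. Hence $w_1(\mu,\mu_n) \to 0$ on $A$, and combining with Theorems~\ref{T:stabmean} and \ref{T:stabcov} gives both (i) and (ii).

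The main obstacle is not deep --- it is purely the bookkeeping required to ensure that the almost-sure event on which the Monte Carlo integrals converge does not depend on the test function $f$. That is precisely what the total boundedness of $\obs{X}_0$ (via Arzel\`{a}--Ascoli) buys: it reduces uniform-in-$f$ convergence to convergence at finitely many functions, to which the strong law of large numbers applies directly.
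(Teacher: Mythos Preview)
Your proposal is correct. The high-level architecture matches the paper's proof exactly: reduce both (i) and (ii) to the almost-sure statement $w_1(\mu,\mu_n)\to 0$ via Theorems~\ref{T:stabmean} and~\ref{T:stabcov}. For that last step, the paper simply cites two standard facts---$w_1$ metrizes weak convergence on compact spaces, and empirical measures converge weakly almost surely (Varadarajan)---which is precisely the first route you sketch. Your ``preferred'' self-contained argument via Kantorovich--Rubinstein duality, Arzel\`{a}--Ascoli total boundedness of $\obs{X}_0$, the strong law on a countable net, and an $\epsilon/3$ upgrade is a genuine alternative: it trades two black-box citations for an explicit, elementary derivation that stays entirely within the toolkit already used in the paper (compactness of observables, SLLN for bounded variables). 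The cost is length; the gain is that the reader sees directly why uniformity over $f$ comes for free from the equicontinuity of 1-Lipschitz functions, rather than having it hidden inside the phrase ``$w_1$ metrizes weak convergence.''
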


\begin{proof}
This is a direct consequence of the stability results obtained in Theorem \ref{T:stabmean} and Theorem \ref{T:stabcov}, and the facts that (a) the Wasserstein distance $w_1$ metrizes weak convergence of probability measures  and (b) $\mu_n \to \mu$ weakly and almost surely \cite{dudley2002real}.
\end{proof}

\noindent
{\em Remark.} Rates of convergence for the empirical observable mean and the empirical observable covariance can be derived from estimates for Wasserstein convergence of empirical measures due to Weed and Bach \cite{weed2019}.


\section{Stability for Heterogeneous Data} \label{S:heterogeneous}

In problems such as analysis and integration of heterogeneous data (e.g., data of different types or from distinct sources), it is important to have estimates for how much information is gained from data of different origin. Theoretically, such problems can be framed as defining distances or measures of divergence between probability measures on different domains. The $L_p$ transportation distance of Sturm, $p \geq 1$, is one such measure of discrepancy or information gained \cite{sturm2006}. We denote this distance by $\dks$ and, as in \cite{mioneedham25}, call it the Kantorovich-Sturm distance. The reason for this terminology is that some authors have referred to $\dks$ as the Gromov-Wasserstein distance but this is not the same as the Gromov-Wasserstein distance defined by M\'{e}moli \cite{memoli2011}, which is a variant of $\dks$. The main goal of this section is to prove that the observable mean and observable covariance are stable with respect to $\dks$, thus satisfying a form of stability even in the realm of heterogeneous data.

Let $\mm{M} = (M,d,\mu)$ and $\mm{M}' = (M',d',\mu')$ be $mm$-spaces, where $M$ and $M'$ are compact, and denote by $K \subseteq M$ and $K' \subseteq M'$ the supports of $\mu$ and $\mu'$, respectively. A {\em metric coupling} between $\mm{M}$ and $\mm{M}'$ is a metric $\delta \colon (M \sqcup M') \times (M \sqcup M') \to \real$ on the disjoint union $M \sqcup M'$ such that $\delta|_{K\times K} = d|_{K\times K}$ and  $\delta|_{K' \times K'} = d'|_{K'\times K'}$. The set of all such metric couplings is denoted $C((d,\mu),(d',\mu'))$. A {\em probabilistic coupling} between $\mu$ and $\mu'$ is a Borel probability measure $h$ on $M \times M'$ that marginalizes to $\mu$ and $\mu'$, respectively. As in Section \ref{S:stabmean}, the set of all probabilistic couplings is denoted $\Gamma(\mu,\mu')$. Note that the support of any coupling $h$ satisfies $\text{supp}\,[h] \subseteq K \times K'$. Therefore,
\begin{equation}
\int_{X\times X'} \phi(x,y) dh(x,y) = \int_{K \times K'} \phi(x,y) dh(x,y),
\end{equation}
for any integrable function $\phi$, a fact that we use repeatedly below.

\begin{definition}[\cite{sturm2006}] \label{D:dks}
The {\em Kantorovich-Sturm $p$-distance}, $p \geq 1$, is defined as
\[
\dks (\mm{M},\mm{M}') \coloneq \inf_{\substack{h \in \Gamma(\mu,\mu')\\ \delta \in C((d,\mu),(d',\mu'))}} \Big( \int_{M \times M'} \delta^p (z,z') \,dh (z,z') \Big)^{1/p}.
\]
\end{definition}

To compare observable means and observable covariance operators, we employ a functional version of the Gromov-Hausdorff distance $\dgh$ between bounded (i.e., finite diameter) pseudo metric spaces, analogous to that of \cite{curry2024,anbouhi2024}. We need $\dgh$ in the generality of bounded pseudo metric spaces to address stability. Let $(Z,d)$ and $(Z',d')$ be bounded pseudo metric spaces and $f \colon Z \to \real$ and $f' \colon Z' \to \real$ continuous functions. We refer to the triples $\mm{Z} = (Z,d,f)$ and $\mm{Z}' = (Z',d',f')$ as functional (pseudo) metric spaces. A {\em correspondence} between $Z$ and $Z'$ is a relation $R \subseteq Z \times Z'$ such that $\pi_Z (R) = Z$ and $\pi_{Z'} (R) = Z'$, where $\pi_Z$ and $\pi_{Z'}$ denote projections.

\begin{definition} \label{D:dgh}
The {\em structural distortion} $dis(R)$ and the {\em functional distortion} $fdis(R)$ of the correspondence $R$ are given by
\[
dis(R) \coloneq \sup_{\substack{(z_1,z'_1) \in R\\ (z_2,z'_2) \in R}} |d(z_1, z_2) - d'(z'_1,z'_2)| 
\quad \text{and} \quad
fdis(R) \coloneq \sup_{(z,z') \in R} |f(z)-f'(z')|,
\]
respectively. The {\em Gromov-Hausdorff distance} between $f$ and $f'$ is defined as
\[
\dgh (f,f') \coloneq \frac{1}{2} \inf_R \max \big\{dis (R), 2 fdis(R) \big\},
\]
where the infimum is taken over all correspondences $R \subseteq Z \times Z'$. 
\end{definition}


\subsection{Heterogeneous Stability of the Mean}

Let $(X,d,\mu)$ be an $mm$-space. To establish the stability of the observable mean $M_\mu \colon \obs{X} \to \real$ with respect to the Gromov-Wasserstein $p$-distance, $p \geq 1$, we first define a family of pseudo metric on $\obs{X}$ induced by the $L_p$ norms with respect to $\mu$. Define the pseudo metric $\dlp{\mu} \colon \obs{X} \times \obs{X} \to \real$ by
\begin{equation} \label{E:dlp}
\dlp{\mu} (f,g) \coloneq \|f-g\|_{p,\mu} = \Big(\int_X |f(x)-g(x)|^p d\mu (x)\Big)^{1/p}.
\end{equation}
We adopt the notation $\obsp{X}$ for the pseudo metric space $(\obs{X}, \dlp{\mu})$ and $\meanp{\mu} \colon \obsp{X} \to \real$ for the observable mean. The boundedness of $\obsp{X}$ and the continuity of $\meanp{\mu}$ are simple to verify.

\begin{theorem} \label{T:hmean}
If $(X,d,\mu)$ and $(X',d',\mu')$ are $mm$-spaces, $X$ and $X'$ are compact, and $p\geq 1$, then
\[
\dgh(\meanp{\mu}, \meanp{\mu'}) \leq \dks(\mu,\mu').
\]
\end{theorem}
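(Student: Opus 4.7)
The plan is to build a correspondence $R \subseteq \obs{X} \times \obs{X'}$ from near-optimal metric and probabilistic couplings realizing $\dks(\mu,\mu')$, and to bound both the functional and structural distortions of Definition~\ref{D:dgh} by quantities approaching that infimum. Fix $\epsilon > 0$, and choose $\delta \in C((d,\mu),(d',\mu'))$ and $h \in \Gamma(\mu,\mu')$ with
\[
A \coloneq \Big(\int_{X \times X'} \delta^p(x,x')\, dh(x,x')\Big)^{1/p} < \dks(\mu,\mu') + \epsilon.
\]
The correspondence is built by a two-stage McShane--Whitney extension through $\delta$: given $f \in \obs{X}$, restrict $f$ to $K = \mathrm{supp}(\mu)$, where $f|_K$ is $1$-Lipschitz with respect to $\delta|_{K \times K} = d|_{K \times K}$; McShane--Whitney extend with respect to $\delta$ to a $1$-Lipschitz function $\hat f \colon K \cup K' \to \real$; then extend $\hat f|_{K'}$ (which is $1$-Lipschitz for $d'|_{K'\times K'}$) by McShane--Whitney to an element $f' \in \obs{X'}$. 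Perform the symmetric construction starting from $\obs{X'}$, and let $R$ collect all such pairs $(f,f')$; surjectivity of both projections of $R$ is immediate.

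For the functional distortion, note that $\mathrm{supp}(h) \subseteq K \times K'$ with $f = \hat f$ on $K$ and $f' = \hat f$ on $K'$, so
\[
|\meanp{\mu}(f) - \meanp{\mu'}(f')| = \Big|\int (\hat f(x) - \hat f(x'))\, dh(x,x')\Big| \leq \int \delta(x,x')\, dh \leq A,
\]
where the last step is Jensen's inequality for the probability measure $h$ and the convex map $t \mapsto t^p$. For the structural distortion, set $F_i(x,x') := f_i(x)$ and $F'_i(x,x') := f'_i(x')$; since $h$ has marginals $\mu$ and $\mu'$, we have $\dlp{\mu}(f_1,f_2) = \|F_1 - F_2\|_{L^p(h)}$ and $\dlp{\mu'}(f'_1,f'_2) = \|F'_1 - F'_2\|_{L^p(h)}$. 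The reverse triangle inequality in $L^p(h)$ followed by the ordinary one yields
\[
\big|\dlp{\mu}(f_1,f_2) - \dlp{\mu'}(f'_1,f'_2)\big| \leq \|F_1 - F'_1\|_{L^p(h)} + \|F_2 - F'_2\|_{L^p(h)} \leq 2A,
\]
using $\|F_i - F'_i\|_{L^p(h)} = \big(\int |\hat f_i(x) - \hat f_i(x')|^p dh\big)^{1/p} \leq A$ from the $1$-Lipschitz property of $\hat f_i$ under $\delta$. Hence $fdis(R) \leq A$ and $dis(R) \leq 2A$, so $\dgh(\meanp{\mu}, \meanp{\mu'}) \leq \tfrac{1}{2}\max\{2A, 2A\} = A < \dks(\mu,\mu') + \epsilon$; sending $\epsilon \downarrow 0$ completes the argument.

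The main obstacle I anticipate is the bookkeeping around the two-stage extension. The intermediate $\hat f$ must be $1$-Lipschitz with respect to the coupling metric $\delta$, so that the key estimate $|\hat f(x) - \hat f(x')| \leq \delta(x,x')$ mediates between $X$ and $X'$ on the joint support of $h$, while the final output $f'$ must be $1$-Lipschitz with respect to $d'$ in order to lie in $\obs{X'}$. The compatibility of $\delta$ with $d$ on $K \times K$ and with $d'$ on $K' \times K'$ is precisely what lets the two applications of McShane--Whitney chain together and lets the coupling $h$ transport the bounds from $\delta$ to the distortions. Once this setup is in place, the two distortion estimates follow mechanically from Jensen and the $L^p$ triangle inequality, and passing to the infimum over $(\delta,h)$ yields the theorem.
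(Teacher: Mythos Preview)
Your proposal is correct and follows essentially the same route as the paper: the paper defines the correspondence $R_\delta$ as all pairs $(f,f')$ for which $f_K\sqcup f'_{K'}$ is $1$-Lipschitz on $(K\sqcup K',\delta)$, proves it is a correspondence via the same two-stage McShane--Whitney extension you describe, and then bounds $dis(R_\delta)\le 2(\int\delta^p\,dh)^{1/p}$ and $fdis(R_\delta)\le(\int\delta^p\,dh)^{1/p}$ by the same $L^p$-triangle-inequality and Jensen arguments. The only cosmetic difference is that the paper works with arbitrary couplings and takes the infimum at the end, whereas you fix $\epsilon$-optimal couplings up front; your $R$ is formally a subset of the paper's $R_\delta$, but this is immaterial since the distortion estimates apply pairwise.
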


The next definition and lemma are in preparation for the proof of Theorem \ref{T:hmean}. Given $f \in \obs{\mu}$, we denote by $f_K \colon K \to \real$ the restriction of $f$ to the support $K$ of $\mu$; that is, $f_K = f|_K$.

\begin{definition} \label{D:mcoupling}
Given $\delta \in C((d,\mu),(d',\mu'))$, define a relation $R_\delta \subseteq \obsp{X} \times \obsp{X'}$ as the collection of pairs $(f,f') \in \obsp{X} \times \obsp{X'}$ such that $f_K \sqcup f'_{K'} \colon (K \sqcup K', \delta) \to \real$ is 1-Lipschitz. In words, $(f,f') \in R_\delta$ if $f_K \sqcup f'_{K'}$ is an observable for the disjoint union $K \sqcup K'$ equipped with the metric $\delta$.
\end{definition}

\begin{lemma} \label{L:correspondence}
If $\delta \in C((d,\mu),(d',\mu'))$ is a metric coupling, then $R_\delta$ is a correspondence.
\end{lemma}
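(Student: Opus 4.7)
The plan is to show that both projections of $R_\delta$ are surjective, which is what it means for $R_\delta \subseteq \obsp{X} \times \obsp{X'}$ to be a correspondence. The proof reduces to a double application of the McShane--Whitney extension used in equation \eqref{E:wmextension}: once on the glued space $(K \sqcup K', \delta)$, and once back up from the support $K'$ to the ambient space $X'$.

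In detail, given $f \in \obsp{X}$, I would proceed as follows. First, restrict $f$ to the support of $\mu$ to obtain $f_K \colon K \to \real$, which is 1-Lipschitz with respect to $d|_{K \times K}$ and hence, by the metric coupling condition $\delta|_{K \times K} = d|_{K \times K}$, also 1-Lipschitz with respect to $\delta$ on $K$. Next, apply the McShane--Whitney extension on the metric space $(K \sqcup K', \delta)$ to produce a 1-Lipschitz function $\tilde f \colon (K \sqcup K', \delta) \to \real$ with $\tilde f|_K = f_K$. Now let $g' \coloneq \tilde f|_{K'} \colon K' \to \real$; by construction $g'$ is 1-Lipschitz with respect to $\delta|_{K' \times K'} = d'|_{K' \times K'}$, so it is also 1-Lipschitz as a function on $(K', d')$. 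Finally, apply the McShane--Whitney extension a second time to extend $g'$ from $K'$ to a 1-Lipschitz function $f' \colon (X', d') \to \real$, so that $f' \in \obs{X'}$ and $f'_{K'} = g'$. Then $f_K \sqcup f'_{K'} = f_K \sqcup g' = \tilde f$ is 1-Lipschitz on $(K \sqcup K', \delta)$ by construction, hence $(f, f') \in R_\delta$. This proves $\pi_{\obsp{X}}(R_\delta) = \obsp{X}$.

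The argument for $\pi_{\obsp{X'}}(R_\delta) = \obsp{X'}$ is entirely symmetric: given $f' \in \obs{X'}$, restrict to $K'$, extend to $(K \sqcup K', \delta)$ via McShane--Whitney, restrict to $K$, and extend to $X$ via a final McShane--Whitney step. No new ideas are required.

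There is no real obstacle here beyond carefully bookkeeping the four metrics in play ($d$, $d'$, $\delta$, and the restrictions of $\delta$ to $K$ and $K'$); the key enabling fact is precisely that a metric coupling is \emph{defined} so that the restrictions of $\delta$ to $K \times K$ and to $K' \times K'$ agree with $d$ and $d'$, which is what lets the 1-Lipschitz property pass freely between the intrinsic and coupled metrics in steps 2 and 4. The compactness of $K$ and $K'$ (hence boundedness of $\delta$) is implicit in using McShane--Whitney, but no measure-theoretic subtlety enters because the relation is defined purely in terms of pointwise Lipschitz behavior on the supports.
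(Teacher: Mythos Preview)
Your proposal is correct and follows essentially the same approach as the paper: restrict $f$ to the support $K$, use the McShane--Whitney extension on $(K \sqcup K', \delta)$ to obtain a 1-Lipschitz function whose restriction to $K'$ is then extended (again via McShane--Whitney) to all of $X'$, and argue symmetrically for the other projection. The only cosmetic difference is that the paper writes the first extension explicitly as $f'_{K'}(x') = \inf_{x \in K} f(x) + \delta(x,x')$, whereas you invoke McShane--Whitney abstractly.
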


\begin{proof}
We need to show that the projections of $R_\delta$ to $\obsp{X}$ and $\obsp{X'}$ are surjective. Let $f \in \obsp{X}$. Define $f'_{K'} \colon K' \to \real$ by
\begin{equation}
f'_{K'}(x') \coloneq \inf_{x \in K} f(x) + \delta(x,x'),
\end{equation}
for any $x' \in K'$. The map $f_K \sqcup f'_{K'} \colon K \sqcup K' \to \real$ is the (upper) McShane-Whitney extension of $f_K$ to $K \sqcup K'$, which is well-defined (because $K$ is compact) and 1-Lipschitz \cite{mcshane34,petrakis2018}. Use another McShane-Whitney construction to extend $f'_{K'} \colon K' \to \real$ to a 1-Lipschitz function $f' \colon X' \to \real$. Then, $(f,f') \in R_\delta$, showing that the projection to $\obsp{X}$ is surjective. The same argument applies to the projection to $\obsp{X'}$.
\end{proof}

\begin{proof}[\bf Proof of Theorem \ref{T:hmean}]
To obtain upper bounds for the Gromov-Hausdorff distance between the observable means $\meanp{\mu} \colon \obsp{\mu} \to \real$ and $\meanp{\mu'} \colon \obsp{\mu'} \to \real$, let $\delta \in C((d,\mu), (d',\mu'))$, $h \in \Gamma(\mu,\mu')$, and $R_\delta \subseteq \obsp{\mu}\times \obsp{\mu'}$ the correspondence described in Definition \ref{D:mcoupling}. To estimate the structural distortion of $R_\delta$, let $(f,f'), (g,g') \in R_\delta$. We have
\begin{equation} 
\dlp{\mu} (f,g)= \Big(\int_X |f(x)-g(x)| d\mu (x) \Big)^{1/p}
= \Big(\int_{X \times X'} |f(x)-g(x)| dh (x,y) \Big)^{1/p}
= \|f-g\|_{p,h}
\end{equation}
and similarly $\dlp{\mu'}(f',g') = \|f'-g'\|_{p,h}$. Therefore,
\begin{equation} \label{E:rpdis}
\begin{split}
|\dlp{\mu} (f,g) &- \dlp{\mu'}(f',g')| = \big|\|f-g\|_{p,h} - \|f'-g'\|_{p,h}\big| \leq \|f-f'\|_{p,h} + \|g-g'\|_{p,h} \\
&= \Big(\int_{X \times X'} |f(x)-f'(y)| dh (x,y) \Big)^{1/p} + 
\Big(\int_{X \times X'} |g(x)-g'(y)| dh (x,y) \Big)^{1/p} \\
&= \Big(\int_{K \times K'} |f(x)-f'(y)| dh (x,y) \Big)^{1/p} + 
\Big(\int_{K \times K'} |g(x)-g'(y)| dh (x,y) \Big)^{1/p} \\
&\leq 2 \Big(\int_{K \times K'} \delta^p(x,y) dh (x,y) \Big)^{1/p}
= 2 \Big(\int_{X \times X'} \delta^p(x,y) dh (x,y) \Big)^{1/p},
\end{split}
\end{equation}
where we have used the fact that $f_k\sqcup f'_{K'}$ and $g_K \sqcup g'_{K'}$ are 1-Lipschitz with respect to $\delta$. Since $(f,f'), (g,g') \in R_\delta$ are arbitrary, we obtain
\begin{equation} \label{E:sdist}
dis(R_\delta) \leq 2 \Big(\int_{X \times X'} \delta^p(x,y) dh (x,y) \Big)^{1/p}.
\end{equation}
For the functional distortion, if $(f,f') \in R_\delta$, we have
\begin{equation} \label{E:meandif}
\begin{split}
|\meanp{\mu} (f) - \meanp{\mu'} (f')| &= \Big|\int_X f(x) d\mu(x) - \int_{X'} f'(y) d\mu' (y)\Big| \\
&= \Big|\int_{X\times X'} f(x) dh(x,y) - \int_{X \times X'} f'(y) dh (x,y)\Big|\\
&\leq \int_{X\times X'} |f(x)-f'(y)| dh(x,y)
= \int_{K\times K'} |f(x)-f'(y)| dh(x,y)\\
&\leq \int_{K\times K'} \delta(x,y) dh (x,y)
= \int_{X\times X'} \delta (x,y) dh (x,y).
\end{split}
\end{equation}
Therefore, for any $p\geq 1$,
\begin{equation} \label{E:fdist}
fdis(R_\delta) \leq \int_{X\times X'} \delta (x,y) dh (x,y)
\leq \Big(\int_{X\times X'} \delta^p (x,y) dh (x,y)\Big)^{1/p}.
\end{equation}
From \eqref{E:sdist} and \eqref{E:fdist}, we obtain
\begin{equation}
\dgh(\meanp{\mu}, \meanp{\mu'}) \leq \frac{1}{2} \max\{ dis (R_\delta), 2 fdis(R_\delta)\} \leq 
\Big(\int_{X\times X'} \delta^p (x,y) dh (x,y)\Big)^{1/p}.
\end{equation}
This in turn implies that $\dgh(\meanp{\mu}, \meanp{\mu'}) \leq \dks(\mu,\mu')$ because the couplings $\delta$ and $h$ are arbitrary.
\end{proof}


\subsection{Heterogeneous Stability of the Covariance}

To address the stability of the observable covariance $\Sigma_\mu \colon \cobs{\mu}\times \cobs{\mu} \to \real$, we introduce a family of pseudo metrics on $\cobs{\mu} \times \cobs{\mu}$, indexed by $p \geq 1$, that turn $\cobs{\mu} \times \cobs{\mu}$ into bounded pseudo metric spaces. For $(f,g), (f',g') \in \cobs{\mu} \times \cobs{\mu}$, define the product (pseudo) metric
\begin{equation} \label{E:dlpc}
\dlpc{\mu} ((f,f'), (g,g')) \coloneq
\max \{\dlp{\mu} (f,g), \dlp{\mu} (f',g')\},
\end{equation}
with $\dlp{\mu}$ as in \eqref{E:dlp}. We use the abbreviation $\pcobsp{\mu}= \cobs{\mu} \times \cobs{\mu}$ and the notation $\covp{\mu} \colon \pcobsp{\mu} \to \real$ to indicate that the domain of $\Sigma_\mu$ is equipped with the distance $\dlpc{\mu}$. 

\begin{theorem} \label{T:hcov}
If $(X,d,\mu)$ and $(X',d',\mu')$ are $mm$-spaces with $X$ and $X'$ compact, then
\[
\dgh(\covp{\mu}, \covp{\mu'}) \leq C(X,X') \dks(\mu,\mu'),
\]
for any $p \geq 1$. Here, $C(X,X')= \max\{2, 2(D_X+D_{X'})\}$, where $D_X$ and $D_{X'}$ are the diameters of $(X,d)$ and $(X',d')$, respectively.
\end{theorem}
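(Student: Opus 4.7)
The plan is to mimic the proof of Theorem \ref{T:hmean}, replacing the correspondence $R_\delta$ with one that is compatible with both a metric coupling $\delta \in C((d,\mu),(d',\mu'))$ and the centering constraints defining $\cobs{\mu}$ and $\cobs{\mu'}$. Concretely, I would define $R^c_\delta \subseteq \pcobsp{\mu} \times \pcobsp{\mu'}$ by declaring $((f,g),(f',g')) \in R^c_\delta$ iff there exist $\tilde f, \tilde g \in \obs{X}$ and $\tilde f', \tilde g' \in \obs{X'}$ with $(\tilde f, \tilde f'), (\tilde g, \tilde g') \in R_\delta$ and $f = \tilde f - M_\mu \tilde f$, $g = \tilde g - M_\mu \tilde g$, $f' = \tilde f' - M_{\mu'}\tilde f'$, $g' = \tilde g' - M_{\mu'}\tilde g'$. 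Since any centered observable is the centering of itself and $R_\delta$ is surjective on both sides (Lemma \ref{L:correspondence}), $R^c_\delta$ is a correspondence.

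For the structural distortion, take $((f,g),(f',g')), ((F,G),(F',G')) \in R^c_\delta$ and $h \in \Gamma(\mu,\mu')$. Viewing functions of one coordinate as functions on $X \times X'$, marginalization gives $\dlp{\mu}(f,F) = \|f - F\|_{p,h}$ and $\dlp{\mu'}(f',F') = \|f' - F'\|_{p,h}$, so the reverse triangle inequality yields
\[
\bigl|\dlp{\mu}(f,F) - \dlp{\mu'}(f',F')\bigr| \leq \|f - f'\|_{p,h} + \|F - F'\|_{p,h}.
\]
The identity $f(x) - f'(y) = \bigl(\tilde f(x) - \tilde f'(y)\bigr) - \bigl(M_\mu \tilde f - M_{\mu'}\tilde f'\bigr)$ exhibits $f - f'$ as an $h$-constant shift of $\tilde f - \tilde f'$, and Jensen's inequality gives $|M_\mu \tilde f - M_{\mu'}\tilde f'| \leq \|\tilde f - \tilde f'\|_{1,h} \leq \|\tilde f - \tilde f'\|_{p,h}$. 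Since $(\tilde f, \tilde f') \in R_\delta$ forces $|\tilde f(x) - \tilde f'(y)| \leq \delta(x,y)$ on $\mathrm{supp}(h) \subseteq K \times K'$, each term above is at most $2(\int \delta^p\, dh)^{1/p}$; combining this with $|\max\{a,b\} - \max\{a',b'\}| \leq \max\{|a-a'|,|b-b'|\}$ gives $dis(R^c_\delta) \leq 4(\int \delta^p\, dh)^{1/p}$.

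For the functional distortion, I would adapt Lemma \ref{L:estimate} to the heterogeneous setting by splitting
\[
\covp{\mu}(f,g) - \covp{\mu'}(f',g') = \int_{X \times X'} (f(x) - f'(y))\, g(x)\, dh(x,y) + \int_{X \times X'} f'(y)\, (g(x) - g'(y))\, dh(x,y).
\]
Lemma \ref{L:bound} bounds $|g(x)| \leq D_X$ and $|f'(y)| \leq D_{X'}$, while the same centering trick used above bounds $\int |f(x) - f'(y)|\, dh \leq 2(\int \delta^p\, dh)^{1/p}$ and analogously for $g - g'$. This gives $fdis(R^c_\delta) \leq 2(D_X + D_{X'})(\int \delta^p\, dh)^{1/p}$, and Definition \ref{D:dgh} yields
\[
\dgh(\covp{\mu}, \covp{\mu'}) \leq \tfrac{1}{2}\max\{4,\,4(D_X + D_{X'})\} \Bigl(\int \delta^p\, dh\Bigr)^{1/p} = C(X,X')\Bigl(\int \delta^p\, dh\Bigr)^{1/p}.
\]
Taking the infimum over $\delta$ and $h$ gives the theorem. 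The main subtlety is that the centered pair $(f, f')$ is no longer $\delta$-Lipschitz on $K \sqcup K'$; the key observation that unlocks the estimate is that $M_\mu \tilde f - M_{\mu'}\tilde f'$ is itself an $h$-integral of $\tilde f - \tilde f'$ and therefore bounded by $\int \delta\, dh$, so enforcing the centering only costs a factor of two.
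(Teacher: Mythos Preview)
Your proposal is correct and follows essentially the same route as the paper: your correspondence $R^c_\delta$ is exactly the paper's product correspondence $\hat S_\delta = S_\delta \times S_\delta$, and your distortion bounds reproduce Lemmas \ref{L:sdis}, \ref{L:fdis} and \ref{L:proddis} (with the latter absorbed via the inequality $|\max\{a,b\}-\max\{a',b'\}|\leq\max\{|a-a'|,|b-b'|\}$). The only organizational difference is that the paper first builds $S_\delta$ on the factors and then takes the product, whereas you go straight to the product; the estimates and the key ``centering costs a factor of two'' observation are identical.
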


Similar to the argument in the proof of the heterogeneous stability of the observable mean, we construct correspondences between the domains $\pcobsp{\mu} = \cobs{\mu} \times \cobs{\mu}$ and $\pcobsp{\mu'} = \cobs{\mu'} \times \cobs{\mu'}$ of the covariance operators starting with a correspondence between the factors $\cobs{\mu}$ and $\cobs{\mu'}$. 

\begin{definition} \label{D:paircorr}
Let $\delta \in C((d,\mu),(d',\mu'))$ be a metric coupling. Define $S_\delta \subseteq \cobs{\mu} \times \cobs{\mu'}$ as the collection of all pairs $(F,F') \in \cobs{\mu} \times \cobs{\mu'}$ such that there exist $(f,f') \in \obs{X} \times \obs{X'}$ such that $f_k \sqcup f'_{K'} \colon (K \sqcup K', \delta) \to \real$ is 1-Lipschitz, $F = f-\mean{\mu}f$ and $F'=f'-\mean{\mu'}f'$.
\end{definition}

The relation $S_\delta$ simply takes any pair of observables $(f,g)$ that defines an observable on the disjoint union $K \sqcup K'$ and centers its restrictions to $X$ and $X'$. That $S_\delta$ is a correspondence can be verified as in the proof of Lemma \ref{L:correspondence} using McShane-Whitney extensions.

\begin{lemma} \label{L:sdis}
If $\delta \in C((d,\mu),(d',\mu'))$ is a metric coupling, then the structural distortion of $S_\delta$ satisfies 
\[
dis(S_\delta) \leq 4 \Big( \int_{X \times X'} \delta^p (x,y) dh (x,y) \Big)^{1/p},
\]
for any probabilistic coupling $h \in \Gamma(\mu,\mu')$.
\end{lemma}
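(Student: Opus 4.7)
The plan is to bound the structural distortion by reducing everything to integrals on the product $X \times X'$ against the coupling $h$, and then exploiting that the ``glued'' observables $f_K \sqcup f'_{K'}$ are $1$-Lipschitz with respect to $\delta$. Write $D_h \coloneq \big(\int_{X \times X'} \delta^p(x,y)\, dh(x,y)\big)^{1/p}$, which is the quantity we must dominate by $dis(S_\delta)/4$. Pick arbitrary $(F_1, F_1'), (F_2, F_2') \in S_\delta$, coming from pairs $(f_1,f_1'), (f_2,f_2')$ as in Definition~\ref{D:paircorr}, so that $F_i = f_i - M_\mu f_i$ and $F_i' = f_i' - M_{\mu'} f_i'$.

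The first step is to lift all $L_p$-norms to $X \times X'$ using that $h$ has marginals $\mu, \mu'$: since $F_1 - F_2$ depends only on the first coordinate and $F_1' - F_2'$ only on the second, $\|F_1-F_2\|_{p,\mu} = \|F_1-F_2\|_{p,h}$ and $\|F_1'-F_2'\|_{p,\mu'} = \|F_1'-F_2'\|_{p,h}$. The reverse triangle inequality in $L_p(h)$ then gives
\[
\big|\dlp{\mu}(F_1,F_2) - \dlp{\mu'}(F_1',F_2')\big| \leq \|(F_1 - F_1') - (F_2 - F_2')\|_{p,h} \leq \|F_1-F_1'\|_{p,h} + \|F_2-F_2'\|_{p,h},
\]
so it suffices to show $\|F_i - F_i'\|_{p,h} \leq 2 D_h$ for $i=1,2$.

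The second step splits the centering: $F_i(x) - F_i'(y) = (f_i(x) - f_i'(y)) - (M_\mu f_i - M_{\mu'} f_i')$. A further triangle inequality in $L_p(h)$ bounds $\|F_i-F_i'\|_{p,h}$ by $\|f_i - f_i'\|_{p,h}$ plus the $L_p(h)$-norm of the constant $M_\mu f_i - M_{\mu'} f_i'$, which equals its absolute value since $h$ is a probability measure. Because $\mathrm{supp}(h) \subseteq K \times K'$ and $f_{iK} \sqcup f'_{iK'}$ is $1$-Lipschitz with respect to $\delta$, we get $|f_i(x) - f_i'(y)| \leq \delta(x,y)$ for $h$-a.e.\ $(x,y)$, yielding $\|f_i - f_i'\|_{p,h} \leq D_h$. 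The same bound controls the mean difference: expressing $M_\mu f_i - M_{\mu'} f_i' = \int_{X\times X'} (f_i(x) - f_i'(y))\, dh(x,y)$ and applying Jensen's inequality (or H\"older) gives $|M_\mu f_i - M_{\mu'} f_i'| \leq \|f_i - f_i'\|_{1,h} \leq \|f_i - f_i'\|_{p,h} \leq D_h$.

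Combining these two $D_h$ estimates gives $\|F_i - F_i'\|_{p,h} \leq 2 D_h$, and hence the earlier display yields $|\dlp{\mu}(F_1,F_2) - \dlp{\mu'}(F_1',F_2')| \leq 4 D_h$. Taking the supremum over $(F_1, F_1'), (F_2, F_2') \in S_\delta$ delivers the claimed bound. The only subtlety I anticipate is the bookkeeping around the fact that $h$ may not be supported on all of $X \times X'$, so invocations of the $1$-Lipschitz property via $\delta$ must be restricted to $K \times K'$; this is routine given that the integrals against $h$ are unchanged. No additional regularity beyond compactness of $K, K'$ is needed.
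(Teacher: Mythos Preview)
Your proof is correct and follows essentially the same route as the paper: lift the $L_p$-norms to $L_p(h)$ via the marginal conditions, apply the reverse triangle inequality, and bound the resulting four terms (two $\|f_i-f_i'\|_{p,h}$ and two mean differences $|M_\mu f_i - M_{\mu'}f_i'|$) each by $D_h$ using the $1$-Lipschitz property of $f_{iK}\sqcup f'_{iK'}$ on the support of $h$. The only cosmetic difference is that you group the four terms into two pairs $\|F_i-F_i'\|_{p,h}\leq 2D_h$, whereas the paper writes out all four terms at once; the substance is identical.
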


\begin{proof}
We adopt the abbreviations $M = \mean{\mu}$ and $M' = \mean{\mu'}$ for the observable mean operators. Let $(F,F'), (G,G') \in S_\delta$. By the definition of $S_\delta$, there are observables $f,g \in \obs{X}$ and $f',g' \in \obs{X'}$ such that $f_K \sqcup f'_{K'}\colon (K \sqcup K', \delta) \to \real$ and $g_K \sqcup g'_{K'}\colon (K \sqcup K', \delta) \to \real$ are 1-Lipschitz, $(F,F') = (f-Mf, f'-M'f')$ and $(G,G')= (g-Mg, g'-M'g')$. Then,
\begin{equation} \label{E:dist1}
\begin{split}
&\quad \ \big|\dlp{\mu} (F,G) - \dlp{\mu'} (F',G')\big| 
= \big|\|F-G\|_{p,\mu} - \|F'-G'\|_{p,\mu'}\big|\\
&= \big|\| (f-Mf) - (g-Mg)\|_{p,\mu} - \| (f'-M'f') - (g'-M'g')\|_{p,\mu'}\big| \\
&= \big|\| (f-Mf) - (g-Mg)\|_{p,h} - \| (f'-M'f') - (g'-M'g')\|_{p,h}\big| \\
&\leq \|f-f'\|_{p,h} + |Mf-M'f'| +  \|g-g'\|_{p,h} + |Mg - M'g'|.
\end{split}
\end{equation}
Arguing as in \eqref{E:rpdis} to estimate the terms $\|f-f'\|_{p,h}$ and $\|g-g'\|_{p,h}$, and using the upper bounds for $|Mf-M'f'|$ and $|Mg-M'g'|$ obtained in \eqref{E:meandif}, inequality \eqref{E:dist1} implies that
\begin{equation}
\big|\dlp{\mu} (F,G) - \dlp{\mu'} (F',G')\big| \leq
4 \Big( \int_{X \times X'} \delta^p (x,y) dh (x,y) \Big)^{1/p}.
\end{equation}
Since $(F,G)$ and $(F',G')$ are arbitrary, the lemma follows.
\end{proof}

\begin{lemma} \label{L:fdis}
If $\delta \in C((d,\mu),(d',\mu'))$ is a metric coupling, then
\[
fdis(\hat{S}_\delta) = \sup_{\substack{(F,F') \in S_\delta \\ (G,G') \in S_\delta}}
|\Sigma_\mu (F,G) - \Sigma_{\mu'} (F',G')| \leq 2(D_X + D_{X'})
\int_{X \times X'} \delta (x,y) dh (x,y),
\]
for any probabilistic coupling $h \in \Gamma(\mu,\mu')$.
\end{lemma}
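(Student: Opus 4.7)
The plan is to lift both covariances to integrals on the product $X \times X'$ via the marginal properties of $h$, and then use an add-and-subtract decomposition so that the bounded centered observables (via Lemma~\ref{L:bound}) carry the diameters $D_X$ and $D_{X'}$, while the $\delta$-Lipschitz joined observables carry $\delta(x,y)$. This mirrors the strategy of Lemma~\ref{L:estimate} but over a metric coupling rather than on a fixed space.

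First I would write, using ${\pi_1}_\sharp h = \mu$ and ${\pi_2}_\sharp h = \mu'$,
\[
\Sigma_\mu(F,G) - \Sigma_{\mu'}(F',G') = \int_{X \times X'} \bigl[F(x)G(x) - F'(y)G'(y)\bigr] \, dh(x,y),
\]
and split the integrand as $F(x)G(x) - F'(y)G'(y) = F(x)\bigl[G(x) - G'(y)\bigr] + \bigl[F(x) - F'(y)\bigr] G'(y)$. Since $F \in \cobs{\mu}$ and $G' \in \cobs{\mu'}$, Lemma~\ref{L:bound} yields $|F(x)| \leq D_X$ and $|G'(y)| \leq D_{X'}$, so the triangle inequality delivers
\[
|\Sigma_\mu(F,G) - \Sigma_{\mu'}(F',G')| \leq D_X \int_{X \times X'} |G(x) - G'(y)| \, dh + D_{X'} \int_{X \times X'} |F(x) - F'(y)| \, dh.
\]

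The next step is to bound each of these two integrals by $2\int \delta\, dh$. Writing $G(x) - G'(y) = [g(x) - g'(y)] - [M_\mu g - M_{\mu'} g']$ and restricting to $\mathrm{supp}(h) \subseteq K \times K'$, I use that $g_K \sqcup g'_{K'}$ is 1-Lipschitz with respect to $\delta$ to get $|g(x) - g'(y)| \leq \delta(x,y)$. The constant $|M_\mu g - M_{\mu'} g'|$ is already controlled by $\int \delta\, dh$ via estimate~\eqref{E:meandif}. Hence $\int |G(x) - G'(y)|\, dh \leq 2\int \delta\, dh$, and the argument for $\int |F(x) - F'(y)|\, dh$ is identical by symmetry. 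Combining yields $|\Sigma_\mu(F,G) - \Sigma_{\mu'}(F',G')| \leq 2(D_X + D_{X'}) \int \delta\, dh$, and taking the supremum over admissible pairs gives the claim.

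The only delicate point is making sure the $\delta$-Lipschitz bound actually applies at the integration points, which is guaranteed by $\mathrm{supp}(h) \subseteq K \times K'$; everything else is bookkeeping of four cross-terms and reuse of Lemma~\ref{L:bound} together with the mean estimate from~\eqref{E:meandif}, so no new analytic idea is required.
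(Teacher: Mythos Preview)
Your proof is correct and follows essentially the same approach as the paper: lift to the coupling $h$, apply an add-and-subtract on the product $F(x)G(x)-F'(y)G'(y)$, bound the surviving centered factor by a diameter via Lemma~\ref{L:bound}, and control the difference factor by $\delta$-Lipschitzness on $K\times K'$ plus the mean estimate~\eqref{E:meandif}. The only cosmetic difference is that the paper inserts the cross term $F'G$ (yielding factors $|G|\leq D_X$ and $|F'|\leq D_{X'}$) whereas you insert $FG'$ (yielding $|F|\leq D_X$ and $|G'|\leq D_{X'}$); both decompositions produce the same bound.
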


\begin{proof}
See Appendix \ref{A:fdis}.
\end{proof}

As the domains of the covariance operators $\Sigma_\mu$ and $\Sigma_{\mu'}$ are the product spaces $\pcobs{\mu} = \cobs{\mu} \times \cobs{\mu}$ and $\pcobs{\mu'} = \cobs{\mu'} \times \cobs{\mu'}$, in addressing stability, we consider the product correspondence $\hat{S}_\delta \subseteq \pcobs{\mu} \times \pcobs{\mu'}$ given by $\hat{S}_\delta \coloneq S_\delta \times S_\delta$. This means that
\begin{equation} \label{E:prodcor}
((F,G),(F',G')) \in \hat{R}_\delta \iff (F,F') \in R_\delta 
\ \text{and} \ (G,G') \in R_\delta.
\end{equation}

\begin{lemma} \label{L:proddis}
Let $\delta \in C((d,\mu),(d',\mu'))$. The structural distortion of the product correspondence $\hat{S}_\delta$ satisfies $dis(\hat{S}_\delta) = dis (S_\delta)$.
\end{lemma}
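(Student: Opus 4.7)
The plan is to reduce the structural distortion of the product correspondence to that of the factor correspondence via the elementary inequality $|\max\{a,b\}-\max\{c,d\}| \leq \max\{|a-c|,|b-d|\}$ applied to the product pseudo-metric $\dlpc{\mu}$, and then to recover the reverse inequality through a ``diagonal'' choice of elements of $\hat{S}_\delta$.

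First I would unpack the definition: a generic element of $\hat{S}_\delta$ has the form $((F,G),(F',G'))$ with $(F,F'), (G,G') \in S_\delta$. Thus, picking two such elements, say $((F_1,G_1),(F_1',G_1'))$ and $((F_2,G_2),(F_2',G_2'))$ with all four component pairs in $S_\delta$, the quantity inside the supremum defining $dis(\hat{S}_\delta)$ is
\[
\bigl|\max\{\dlp{\mu}(F_1,F_2),\dlp{\mu}(G_1,G_2)\} - \max\{\dlp{\mu'}(F_1',F_2'),\dlp{\mu'}(G_1',G_2')\}\bigr|.
\]
Applying the elementary inequality above with $a=\dlp{\mu}(F_1,F_2)$, $b=\dlp{\mu}(G_1,G_2)$, $c=\dlp{\mu'}(F_1',F_2')$, $d=\dlp{\mu'}(G_1',G_2')$, this is bounded by
\[
\max\bigl\{|\dlp{\mu}(F_1,F_2)-\dlp{\mu'}(F_1',F_2')|,\ |\dlp{\mu}(G_1,G_2)-\dlp{\mu'}(G_1',G_2')|\bigr\}.
\]
Each of the two terms in this max is itself at most $dis(S_\delta)$, since $(F_1,F_1'),(F_2,F_2') \in S_\delta$ and $(G_1,G_1'),(G_2,G_2') \in S_\delta$. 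Taking the supremum over all admissible choices yields $dis(\hat{S}_\delta) \leq dis(S_\delta)$.

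For the reverse inequality, given any two pairs $(F_1,F_1'), (F_2,F_2') \in S_\delta$, I would use the diagonal elements $((F_1,F_1),(F_1',F_1'))$ and $((F_2,F_2),(F_2',F_2'))$ of $\hat{S}_\delta$: these are valid by \eqref{E:prodcor}, and for them the product metric collapses to the single factor metric, so the distortion becomes exactly $|\dlp{\mu}(F_1,F_2)-\dlp{\mu'}(F_1',F_2')|$. Taking the supremum over $(F_1,F_1'),(F_2,F_2') \in S_\delta$ shows $dis(\hat{S}_\delta) \geq dis(S_\delta)$, giving the claimed equality.

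There is no real obstacle here; the lemma is essentially the observation that the $\ell_\infty$-product of a pseudo-metric with itself, restricted to the product correspondence, has the same structural distortion as the original. The only point requiring care is verifying that the pairs used in the reverse direction genuinely lie in $\hat{S}_\delta$, which follows immediately from the definition $\hat{S}_\delta = S_\delta \times S_\delta$ and the characterization \eqref{E:prodcor}.
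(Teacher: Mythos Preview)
Your proof is correct and follows essentially the same approach as the paper's own proof: both directions rely on the elementary inequality $|a\vee b - c\vee d| \leq |a-c|\vee|b-d|$ for one inequality and the diagonal embedding $F\mapsto (F,F)$ for the other. The only cosmetic difference is that the paper establishes $dis(S_\delta)\leq dis(\hat{S}_\delta)$ first and $dis(\hat{S}_\delta)\leq dis(S_\delta)$ second, whereas you reverse the order.
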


\begin{proof}
If $(F,F'), (G,G') \in S_\delta$, then $((F,F'),(F,F')), ((G,G'),(G,G')) \in \hat{S}_\delta$ and we have
\begin{equation}
\begin{split}
\big|\dlp{\mu} (F,G) - \dlp{\mu'} (F',G')\big| 
&= \big|\|F-G\|_{p,\mu} - \|F'-G'\|_{p,\mu'}\big| \\
&= \big|\dlpc{\mu} ((F,F),(G,G)) - \dlpc{\mu'} ((F',F'),(G',G'))\big|
\leq dis(\hat{S}_\delta).
\end{split}
\end{equation}
Since $(F,F'), (G,G') \in S_\delta$ are arbitrary, we obtain $dis(S_\delta) \leq dis(\hat{S}_\delta)$. In the argument for the opposite inequality, we adopt the notation $a \vee b = \max \{a,b\}$ and use the fact that $|a\vee b - c\vee d| \leq |a-c| \vee |b-d|$, for any $a,b,c,d \in \real$. Let $((F_1,F_2),(F'_1,F'_2)) \in \hat{S}_\delta$ and $((G_1,G_2),(G'_1,G'_2)) \in \hat{S}_\delta$. Then,
\begin{equation}
\begin{split}
&\quad \ \big|\dlpc{\mu} ((F_1,F_2),(G_1,G_2)) - \dlpc{\mu'} ((F'_1,F'_2),(G'_1,G'_2))\big| \\
&= \big|\dlp{\mu} (F_1,G_1) \vee \dlp{\mu} (F_2,G_2) - 
\dlp{\mu'} (F'_1,G'_1) \vee \dlp{\mu} (F'_2,G'_2) \big| \\
&\leq \big|\dlp{\mu} (F_1,G_1) - \dlp{\mu'} (F'_1,G'_1)\big| \vee
\big|\dlp{\mu} (F_2,G_2) - \dlp{\mu} (F'_2,G'_2) \big| \\
&\leq dis(S_\delta).
\end{split}
\end{equation}
As the choice of pairs in the correspondence is arbitrary, it follows that $dis(\hat{S}_\delta) \leq dis(S_\delta)$.
\end{proof}

\begin{proof}[\bf Proof of Theorem \ref{T:hcov}]
Let $\delta \in C((d,\mu),(d',\mu'))$ and $h \in \Gamma(\mu,\mu')$, and $\hat{S}_\delta = S_\delta \times S_\delta$ the product correspondence defined in \eqref{E:prodcor}. By Lemmas \ref{L:sdis} and \ref{L:proddis}, the structural distortion of $\hat{S}_\delta$ satisfies
\begin{equation} \label{E:dispair}
dis(\hat{S}_\delta) \leq 4 \Big( \int_{X \times X'} \delta^p (x,y) dh (x,y) \Big)^{1/p}.
\end{equation}
For the functional distortion, Lemma \ref{L:fdis} ensures that
\begin{equation} \label{E:fdispair}
\begin{split}
fdis(\hat{S}_\delta) &\leq 2(D_X + D_{X'}) \int_{X \times X'} \delta (x,y) dh (x,y) \\
&\leq 2(D_X + D_{X'}) \Big(\int_{X \times X'} \delta (x,y) dh (x,y)\Big)^{1/p}.
\end{split}
\end{equation}
From \eqref{E:dispair} and \eqref{E:fdispair}, we obtain
\begin{equation}
\begin{split}
\dgh(\covp{\mu}, \covp{\mu'}) &\leq 
\frac{1}{2} \max\{dis(\hat{S}_\delta), 2fdis(\hat{S}_\delta)\} \\
&\leq \max\{2, 2 (D_X + D_{X'})\} \Big(\int_{X \times X'} \delta (x,y) dh (x,y)\Big)^{1/p}.
\end{split}
\end{equation}
Since the couplings $\delta$ and $h$ are arbitrary, we get $\dgh(\covp{\mu}, \covp{\mu'}) \leq C(X,X') \dks(\mu,\mu')$.
\end{proof}


\section{Summary and Discussion} \label{S:summary}

\paragraph {Summary.} This paper introduced an approach to statistical analysis of data in metric spaces based on metric observables, which are defined as 1-Lipschitz scalar fields $f \colon X \to \real$. We introduced the concepts of observable mean and observable covariance which form the basis of principal observable analysis. Principal observables were constructed as a hierarchy of metric observables, based on a maximization-of-variance principle, leading to a technique for vectorization, dimension reduction, visualization, and analysis of metric or networked data. Principal observables also provide basis functions for signal analysis in the observable domain. We developed a framework for the formulation of the models and algorithms to calculate principal observables, perform principal observable analysis, and represent signals in the observable domain. We also established the stability of the observable mean and observable covariance for both homogeneous and heterogeneous data.

\paragraph{Perspectives.} Principal observable analysis, as a vectorization technique for metric or networked data, gives an alternative pathway for integration of such data with machine learning tools such as neural networks, a topic that can be further explored. Additional avenues for further investigation also include a more thorough study of representation of signals in the observable domain and construction of basis functions on metric domains that seek to optimize objectives other than observable variance. Other extensions of the methods and tools discussed in this paper include a formulation for (non-compact) Polish $mm$-spaces; that is, $mm$-spaces whose underlying metric spaces are complete and separable.

Whereas principal observables provide an informative summary of the shape of metric data, understanding the shape of the principal observables themselves should lead to further insights into the structure and organization of probability measures on metric spaces. Toward this goal, one can exploit constructs such as merge trees \cite{morozov2013}, Reeb graphs \cite{Biasotti2014}, and other geometric and topological summaries of metric observables $f \colon X \to \real$ such as the decorated  versions of merge trees and Reeb graphs developed in \cite{curry2022,curry2024}.


\bibliographystyle{abbrv}
\bibliography{poabiblio}


\appendix
\section{Appendix}

\subsection{Existence of Principal Observables} \label{A:po}

We first verify the sequential compactness (which is equivalent to compactness) of the metric space $(\cobs{\mu}, \|\cdot\|_\infty)$ of centered principal observables on $(X,d)$ equipped with the metric induced by the $L_\infty$ norm. We need to show that any sequence $f_n \in \cobs{\mu}$, $n \geq 1$, has a convergent subsequence. By Lemma \ref{L:bound}, $\|f_n\|_\infty < D_X$, for every $n \geq 1$, so that the sequence is uniformly bounded. Since each $f_n$ is 1-Lipschitz, the sequence is uniformly equicontinuous. Thus, by the Arzel\`{a}-Ascoli Theorem, $f_n$ admits a uniformly convergent subsequence $f_{n_k}$, $k \geq 1$. Set $f = \lim_{k \to \infty} f_{n_k}$. Since each $f_{n_k}$ is 1-Lipschitz and centered, and the convergence is uniform, the limiting function $f$ is also centered and 1-Lipschitz; that is, $f \in \cobs{\mu}$.

By (ii) of Proposition \ref{P:continuity}, the variance function $f \mapsto \sigma^2(f)$ is continuous on $\cobs{\mu}$. Hence, the compactness of $\cobs{\mu}$ ensures that $\sigma^2$ attains a maximum on $\cobs{\mu}$. This proves that a first principal observable $\phi_1$ exists if $\cobs{\mu}$ contains functions of non-zero variance. Inductively, assume the existence of the first $n-1$ principal observables  $\phi_1, \ldots, \phi_{n-1}$ and define $T_n \colon \lip_\mu^c\to \real^{n-1}$ by $T_n (f) = (\langle f,\phi_1\rangle, \ldots, \langle f,\phi_1\rangle)$, where we adopted the abbreviation $\langle f,g \rangle = \int_X f(x)g(x) d\mu (x)$. Then, $V_n = T_n^{-1} (0) \subseteq \lip_\mu^c$ is the closed subspace of centered Lipschitz functions that are $\mu$-orthogonal to $\phi_1, \ldots, \phi_{n-1}$. Thus, $V_n \cap \cobs{\mu}$ is also compact because it is a closed subset of a compact space. Therefore, if $V_n \cap \cobs{\mu}$ contains functions on non-zero variance, the $n$th principal observable $\phi_n$ exists.

\subsection{Proof of Lemma \ref{L:fdis}} \label{A:fdis}

\begin{proof}[Proof of Lemma \ref{L:fdis}]
We use the same notation as in the proof of Lemma \ref{L:sdis} and use the abbreviations $\Sigma=\Sigma_\mu$ and $\Sigma' = \Sigma_{\mu'}$. For any $(F,F'), (G,G') \in R_\delta$, we have that
\begin{equation} \label{E:covest1}
\begin{split}
|\Sigma(F,G) &- \Sigma'(F',G')| =
|\Sigma (f-Mf,g-Mg)-\Sigma' (f'-M'f',g'-M'g')| \\
&=\Big| \int_{X\times X'} (f(x)-Mf)(g(x)-Mg) - 
(f'(x')-M'f')(g'(x')-M'g') dh(x,x')\Big| \\
&\leq \int_{X\times X'} \big|(f(x)-Mf)(g(x)-Mg) - 
(f'(x')-M'f')(g(x)-Mg) \big| dh (x,x') \\
&\,+ \int_{X\times X'} \big|(f'(x')-M'f')(g(x)-Mg)
- (f'(x')-M'f')(g'(x')-M'g') \big| dh (x,x') \\
&= \int_{X\times X'} \big|(f(x)-Mf) - 
(f'(x')-M'f')\big| \cdot \big|g(x)-Mg\big| dh (x,x') \\
&\,+ \int_{X\times X'} \big|(g(x)-Mg) 
- (g'(x')-M'g')\big| \cdot \big| (f'(x')-M'f') \big| dh (x,x').
\end{split}
\end{equation}
By Lemma \ref{L:bound}, 
\begin{equation}
|g(x)-Mg|\leq D_X \quad \text{and} \quad |f'(x')-M'f'|\leq D_{X'}, 
\end{equation}
$\forall x\in X$ and $\forall x'\in X'$. Thus, \eqref{E:covest1} implies that
\begin{equation} \label{E:covest2}
\begin{split}
|\Sigma(F,G)-\Sigma'(F',G')| &\leq D_X |Mf-M'f'| + 
D_X \int_{X\times X'} |f(x)- f'(x')|dh (x,x') \\
&\,+ D_{X'} |Mg-M'g'| + 
D_{X'} \int_{X\times X'} |g(x)-g'(x')|dh (x,x') \\
&= D_X |Mf-M'f'| + 
D_X \int_{K\times K'} |f(x)- f'(x')|dh (x,x') \\
&\,+ D_{X'} |Mg-M'g'| + 
D_{X'} \int_{K\times K'} |g(x)-g'(x')|dh (x,x') \\
&\leq D_X |Mf-M'f'| + 
D_X \int_{K\times K'} \delta(x,x') dh (x,x') \\
&\,+ D_{X'} |Mg-M'g'| + 
D_{X'} \int_{K\times K'} \delta(x,x') dh (x,x') \\
&= D_X |Mf-M'f'| + 
D_X \int_{X\times X'} \delta(x,x') dh (x,x') \\
&\,+ D_{X'} |Mg-M'g'| + 
D_{X'} \int_{X\times X'} \delta(x,x') dh (x,x'),
\end{split}
\end{equation}
where we have used the fact that $f_K \sqcup f'_{K'}$ and $g_K\sqcup g'_{K'}$ are 1-Lipschitz with respect to $\delta$. From \eqref{E:meandif} and \eqref{E:covest2}, we obtain
\begin{equation}
|\Sigma(F,G)-\Sigma'(F',G')| \leq 2 (D_X + D_{X'}) 
\int_{X\times X'} \delta(x,x') dh (x,x').
\end{equation}
Since $(F,F')$ and $(G,G')$ are arbitrary, we obtain
\begin{equation}
fdis(\hat{S}_\delta) \leq 2 (D_X + D_{X'}) 
\int_{X\times X'} \delta(x,x') dh (x,x'),
\end{equation}
as claimed.
\end{proof}

\end{document}